\newenvironment{eq}{\begin{equation}}{\end{equation}}
\newenvironment{proof}{{\bf Proof}:}{\vskip 5mm }
\newtheorem{proposition}{Proposition}[subsection]
\newtheorem{lemma}[proposition]{Lemma}
\newtheorem{definition}[proposition]{Definition}
\newtheorem{theorem}[proposition]{Theorem}
\newtheorem{example}[proposition]{Example}
\newtheorem{remark}[proposition]{Remark}
\newtheorem{problem}[proposition]{Problem}
\newtheorem{construction}[proposition]{Construction}
\newcommand{\llabel}[1]{\label{#1}}
\newcommand{\comment}[1]{}
\newcommand{\sr}{\rightarrow}
\newcommand{\nn}{{\bf N\rm}}
\newcommand{\wt}{\widetilde}
\newcommand{\rtr}{\triangleright}
\newcommand{\spc}{{\,\,\,\,\,\,\,}}
\newcommand{\B}{{\bf B}}
\newcommand{\FF}{{\bf F}}
\newcommand{\TT}{{\bf T}}
\renewcommand{\SS}{{\bf S}}
\newcommand{\BD}{{\bf BD}}
\newcommand{\JJ}{{\mathcal J}}
\begin{document}
\parskip = 2mm
\begin{center}
{\bf\Large B-systems\footnote{\em 2000 Mathematical Subject Classification: 03B15, 03B22, 03F50, 03G25}}

\vspace{3mm}

{\large\bf Vladimir Voevodsky}\footnote{School of Mathematics, Institute for Advanced Study,
Princeton NJ, USA. e-mail: vladimir@ias.edu}$^,$\footnote{Work on this paper was supported by NSF grant 1100938.}
\vspace {3mm}

{Started September 08, 2009, cont. July 23, 2012, November 20, 2012, December 1, 2012, June 12, 2014, October 10, 2014}  
\end{center}

\begin{abstract}
B-systems are algebras (models) of an essentially algebraic theory that is expected to be constructively equivalent to the essentially algebraic theory of C-systems which is, in turn, constructively equivalent to the theory of contextual categories.  The theory of B-systems is closer in its form to the structures directly modeled by contexts and typing judgements of (dependent) type theories and further away from categories than contextual categories and C-systems. 
\end{abstract}

%
%\tableofcontents

\subsection{Introduction}
In \cite[Def. 2.2]{Csubsystems} we introduced the concept of a C-system. The type of the C-systems is constructively equivalent to the type of contextual categories defined by Cartmell in \cite{Cartmell1} and \cite{Cartmell0} but the definition of a C-system is slightly different from the Cartmell's foundational definition.

The concept of a B-system is introduced in this  paper. It provides an abstract formulation of a structure formed by contexts and ``typing judgements'' of a type theory relative to the operations of context extensions, weakening and substitutions. 

We define B-systems in several steps. First we describe pre-B-systems that are models of an essentially algebraic theory with countable families of sorts operations but no relations. 

Already at this stage we tart to distinguish between unital and non-unital (pre-)B-systems. This distinction continues throughout the paper. While non-unital B-systems have no direct connection to C-systems and therefore no direct connection to categories they have a definition with interesting symmetries and we believe that they are quite interesting in there own right. 

Following the ideas of \cite{Csubsystems}, how to construct a unital pre-B-system from a C-system. This construction is functorial with respect to homomorphisms of C-systems and unital pre-B-systems and moreover defines a full embedding of the category of C-systems to the category of unital pre-B-systems. 

It is more or less clear from the proof of the full embedding theorem that the image of this full embedding consists of unital pre-B-systems whose operations satisfy some algebraic conditions. We suggest a form of these conditions in our definition of a non-unital and then unital B-system (Definitions \ref{2014.10.10.def2a} and \ref{2014.10.10.def2b}). 

We conclude the first part of the paper with a problem (essentially a conjecture) that the image of the full embedding from C-systems to unital pre-B-systems is precisely the class of unital B-systems. A constructive solution to this problem would also provide an explicit construction of a C-system from a unital B-system.

In the second part we describe an approach to the definition of non-unital B-systems that can be conveniently formalized in Coq and that provide a possible step towards the definition of higher B-systems that is B-systems whose component types are of higher h-levels. 

The work on this paper, especially in the part where the axioms of $TT$, $SS$, $TS$ and $ST$ of B-systems  are introduced was influenced and facilitated by recent discussions with Richard Garner and Egbert Rijke. Many other ideas of this work go back to \cite{NTS}.  

The subject of this paper is closely related to the subject of recent notes by John Cartmell \cite{Cartmell2014}. The most important difference between our exposition and that of Cartmell is that we are using the formalism of {\em essentially algebraic} theories while Cartmell uses the formalism of {\em generalized algebraic} theories. While there are important connections between these two kinds of theories there are also important distinctions which we intend to discuss in a future paper. 

I am grateful to The Centre for Quantum Mathematics and Computation (QMAC) and the Mathematical Institute of the University of Oxford for their hospitality during my work on this paper.

\subsection{pre-B-systems}
\begin{definition}
\llabel{2014.10.10.def1}
A non-unital pre-B-system a collection of data of the following form:
\begin{enumerate}
\item for all $n\in\nn$ two set $B_n$ and $\wt{B}_{n+1}$,
\item for all $n\in\nn$ maps of the form:
\begin{enumerate}
\item $ft:B_{n+1}\sr B_n$,
\item $\partial:\wt{B}_{n+1}\sr B_{n+1}$
\end{enumerate}
\item an element $pt\in B_0$,
\item for all $m,n\in \nn$ such that $m\ge n$ maps of the form:
\begin{enumerate}
\item $T:(Y\in B_{{n}+1}, X\in B_{{m}+1}, ft(Y)=ft^{{m}+1-{n}}(X))\sr B_{{m}+2}$,  
\item $\wt{T}:(Y\in B_{{n}+1}, r\in\wt{B}_{{m}+1}, ft(Y)=ft^{{m}+1-{n}}\partial(r))\sr \wt{B}_{{m}+2}$, 
\item $S:(s\in\wt{B}_{{n}+1}, X\in B_{{m}+2}, \partial(s)=ft^{{m}+1-{n}}(X))\sr B_{{m}+1}$, 
\item $\wt{S}: (s\in \wt{B}_{{n}+1}, r\in\wt{B}_{{m}+2}, \partial(s)=ft^{{m}+1-{n}}\partial(r))  \sr \wt{B}_{{m}+1}$,  
\end{enumerate}
\end{enumerate}
\end{definition}
\begin{definition}
\llabel{2014.10.20.def1}
A unital pre-B-system is a non-unital pre-B-system together with, for every $n\ge 0$ of an operation
$$\delta:B_{n+1}\sr \wt{B}_{n+2}$$
\end{definition}
Homomorphisms of non-unital and unital pre-B-systems are defined in the obvious way giving us the corresponding  categories.  Also in the obvious way one defines the concepts of sub-pre-B-systems.

Let $CC$ be a C-system as defined in \cite[Def. 2.2]{Csubsystems}. Recall the following notations. For $X$ such that $l(X)\ge i$ and $f:Y\sr ft^i(X)$ denote by by $f^*(X,i)$  the objects and by $q(f,X,i):f^*(X,i)\sr X$ the morphisms defined inductively by the rule 
$$f^*(X,0)=Y\,\,\,\,\,\,\,\,\,q(f,X,0)=f,$$
$$f^*(X,i+1)=q(f,ft(X),i)^*(X)\,\,\,\,\,\,\,\,\,q(f,X,i+1)=q(q(f,ft(X),i), X).$$
If $l(X)<i$, then $q(f,X,i)$ is undefined since $q(-,X)$ is undefined for $X=pt$ and again, as in the case of $p_{X,i}$,  {\em all of the considerations involving $q(f,X,i)$ are modulo the qualification that $l(X)\ge i$}. 

For $i\ge 1$, $(s:ft(X)\sr X)\in \wt{Ob}$ such that $l(X)\ge i$, and $f:Y\sr ft^i(X)$ let 
$$f^*(s,i):f^*(ft(X),i-1)\sr f^(ft(X),i)$$
be the pull-back of the section $ft(X)\sr X$ along the morphism $q(f,ft(X),i-1)$. We again use the agreement that always when $f^*(s,i)$ is used the condition $l(X)\ge i$ is part of the assumptions. 

One constructs a unital pre-B-system from $CC$ as follows. The B-sets of $CC$ are:
$$B_n(CC)=Ob_n(CC)=\{X\in Ob(CC)\,|\,l(X)=n\}$$
$$\wt{B}_{n+1}(CC)=\wt{Ob}_n(CC)=\{(X,s)\in \wt{Ob}(CC)\,|\,l(X)=n+1\}$$
The definition of $pt$, $ft$ and $\partial$ is obvious. The operations $T$, $\wt{T}$, $S$, $\wt{S}$ and $\delta$ on the B-sets of a C-system are as follows:
\begin{enumerate}
\item $T$ sends $(Y,X)$ such that $ft(Y)=ft^{{m}+1-{n}}(X)$ to  $p_Y^*(X,{m}+1-{n})$,
\item $\wt{T}$ sends $(Y,r)$ such that $ft(Y)=ft^{{m}+1-{n}}\partial(r)$ to $p_Y^*(r,{m}+1-{n})$,
\item $S$ sends $(s,X)$ such that $\partial(s)=ft^{{m}+1-{n}}(X)$ to $s^*(X,{m}+1-{n})$,
\item $\wt{S}$ sends $(s,r)$ such that $\partial(s)=ft^{{m}+1-{n}}\partial(r)$ to $s^*(r,{m}+1-{n})$.
\item $\delta$ sends $X$ to the diagonal section of the projection $p_X^*X\sr X$.
\end{enumerate}
When we need to distinguish between the unital pre-B-system defined by $CC$ and its non-unital analog we will write $uB(CC)$ for the unital version and $nuB(CC)$ for the non-unital one. 

One of the main results of \cite{Csubsystems}, Proposition 4.3 can be reformulated as follows:
\begin{theorem}
\llabel{2014.06.26.th1}
There is a natural bijection between C-subsystems of a C-system CC and unital sub-pre-B-systems of $uB(CC)$. 
\end{theorem}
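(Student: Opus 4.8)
The plan is to exhibit the bijection explicitly and then reduce its two nontrivial halves to the correspondence already established in Proposition 4.3 of \cite{Csubsystems}. Given a C-subsystem $DD\subseteq CC$, its objects and sections are graded by the length function $l$, and I would assign to $DD$ the families of subsets
$$B'_n=Ob_n(DD)=B_n(CC)\cap Ob(DD),\ssp \wt{B}'_{n+1}=\wt{Ob}_n(DD)=\wt{B}_{n+1}(CC)\cap\wt{Ob}(DD).$$
Conversely, to a unital sub-pre-B-system $(B'_\bullet,\wt{B}'_\bullet)$ of $uB(CC)$ I would associate the pair of subsets $\bigcup_n B'_n\subseteq Ob(CC)$ and $\bigcup_n\wt{B}'_{n+1}\subseteq\wt{Ob}(CC)$. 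As maps between subsets of the fixed ambient sets $Ob(CC)$ and $\wt{Ob}(CC)$ these two assignments are visibly mutually inverse, so the entire content of the statement is that each lands in the intended class: the first produces a sub-pre-B-system and the second a C-subsystem.

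For the forward direction I would check that restricting the operations of $uB(CC)$ to the B-sets of $DD$ keeps one inside $DD$. Since a C-subsystem contains $pt$ and is closed under $ft$ and $\partial$, the only thing to verify is closure under $T,\wt{T},S,\wt{S}$ and $\delta$. But by construction these operations are given by the formulas $p_Y^*(-)$, $s^*(-)$ and formation of the diagonal section, and a C-subsystem is by definition closed under pullback along canonical projections, under pullback along sections, and under passage to diagonal sections; hence $(B'_\bullet,\wt{B}'_\bullet)$ is a unital sub-pre-B-system.

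The backward direction is where the real work lies, and it is exactly what Proposition 4.3 of \cite{Csubsystems} supplies. The issue is that a C-subsystem must be closed under the full pullback operation $f^*(X,i)$ together with the morphisms $q(f,X,i)$ for an \emph{arbitrary} morphism $f$, whereas a sub-pre-B-system is only assumed closed under the special cases $T=p_Y^*(-)$ and $S=s^*(-)$. The key point is that in a C-system every morphism factors as a composite built from canonical projections and sections, so that the general $f^*$ decomposes into iterated applications of $T$ and $S$; closure under $T$ and $S$ therefore already forces closure under all $f^*$. I would invoke Proposition 4.3 for this step rather than redo the factorization, and then verify that the closure conditions appearing there translate, index for index, into closure of $(B'_\bullet,\wt{B}'_\bullet)$ under $T,\wt{T},S,\wt{S},\delta$ under the identifications $B_n\leftrightarrow Ob_n$ and $\wt{B}_{n+1}\leftrightarrow\wt{Ob}_n$.

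The main obstacle, and the step demanding the most care, is the bookkeeping of level shifts: matching the exponents $m+1-n$ in the domains of $T,\wt{T},S,\wt{S}$ with the iterated $ft$-conditions and with the indices in $f^*(X,i)$ and $f^*(s,i)$, and confirming that the unital operation $\delta$ corresponds precisely to the diagonal sections required of a C-subsystem. Once this dictionary between the two systems of operations is pinned down, naturality is automatic: every map involved is a restriction of the same structure map of $uB(CC)$, and a homomorphism of C-systems carries C-subsystems to C-subsystems and the associated sub-pre-B-systems to the associated sub-pre-B-systems compatibly, so the bijection commutes with the functor $uB(-)$.
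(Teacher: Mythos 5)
Your proposal is correct and follows essentially the same route as the paper: the paper offers no independent proof, presenting the theorem as a direct reformulation of Proposition 4.3 of \cite{Csubsystems}, and your argument likewise delegates the substantive step (that closure under the operations $T$, $\wt{T}$, $S$, $\wt{S}$, $\delta$ forces closure under pullback along arbitrary morphisms) to that same proposition. The explicit dictionary you set up between $Ob_n(DD)$, $\wt{Ob}_n(DD)$ and the B-sets, together with the forward closure check, is exactly the translation the paper leaves implicit in the word ``reformulated.''
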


Another way to define a pre-B-system is from a pair $(R,LM)$ where $R$ is a monad on sets and $LM$ a left module over $R$ with values in sets as in \cite{Cofamodule} . For the pre-B-system $B(R,LM)$ we have
$$B_n(R,LM)=LM(\emptyset)\times\dots\times LM(\{1,\dots,n-1\})$$
$$\wt{B}_{n+1}(R,LM)=B_{n+1}(R,LM)\times R(\{1,\dots,n\})$$
The operations $ft$ and $\partial$ are obvious. The element $pt$ is the only point of the product of the empty family of sets. The rest of the operations are defined as follows. For $E\in LM(\{1,\dots,m\})$ or $E\in R(\{1,\dots,m\})$ and $n\ge 1$ we set:
$$t_n(E)=E[n+1/n,n+2/n+1,\dots,m+1/m]$$
$$s_n(E)=E[n/n+1,n+1/n+2,\dots,m-1/m]$$
\begin{enumerate}
\item Operations $T$:
$$T((E_1,\dots,E_n,F),(E_1,\dots,E_n,E_{n+1},\dots,E_{m+1}))=$$
$$(E_1,\dots,E_n,F,t_{n+1}E_{n+1},\dots,t_{n+1}E_{m+1})$$
\item Operations $\wt{T}$:
$$\wt{T}((E_1,\dots,E_n,F),(E_1,\dots,E_n,E_{n+1},\dots,E_{m+1}, r))=$$
$$(E_1,\dots,E_n,F,t_{n+1}E_{n+1},\dots,t_{n+1}E_{m+1},t_{n+1}r)$$
\item Operations $S$:
$$S((E_1,\dots,E_n,F,s),(E_1,\dots,E_n,F,E_{n+1},\dots,E_{m+1}))=$$
$$(E_1,\dots,E_n,s_n(E_{n+1}[s/n]),\dots,s_n(E_{m+1}[s/n]))$$
\item Operation $\wt{S}$:
$$S((E_1,\dots,E_n,F,s),(E_1,\dots,E_n,F,E_{n+1},\dots,E_{m+1},r))=$$
$$(E_1,\dots,E_n,s_n(E_{n+1}[s/n]),\dots,s_n(E_{m+1}[s/n]),s_n(r[s/n]))$$
\item Operations $\delta$:
$$\delta(E_1,\dots,E_n,E_{n+1})=(E_1,\dots,E_n,E_{n+1},\eta_R(n+1))$$
where $\eta_R$ is the unit of the monad $R$.
\end{enumerate}
Note that the unit of $R$ also participates in the definition of operations $S$ and $\wt{S}$ since the explicit form of the substitution $E\mapsto E[s/n]$ involves $\eta_R$. 

We can form non-unital pre-B-systems using this construction by considering non-unital sub-pre-B-systems in $uB(R,LM)$ (cf. Example \ref{2014.10.20.ex1} below). 

For this pre-B-system as well as for its subsystems and regular quotients we can use notations such as $\Gamma\vdash o:T$ directly since in this case $\Gamma\in B_n$, $T\in LM(\{1,\dots,n\})$ and $o\in R(\{1,\dots,n\})$ are elements of types or sets that do not depend on elements of other types or sets and the substitution is defined on the level of these sets.

If $CC(R,LM)$ is the C-system corresponding to $(R,LM)$ then there is a constructive isomorphism
$$B(CC(R,LM))\cong B(R,LM)$$

The construction $CC\mapsto B(CC)$ is clearly compatible with homomorphisms and defines a functor from the category of C-systems to the category of unital pre-B-systems.
\begin{theorem}
\llabel{2014.10.10.th1}
The functor $CC\mapsto uB(CC)$ is a full embedding.
\end{theorem}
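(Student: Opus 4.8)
The plan is to show that $uB$ is fully faithful and injective on objects. The backbone of both halves is a reconstruction principle for morphisms: in any C-system, a morphism $f:Y\sr X$ with $l(X)=n\ge 1$ factors canonically as $f=q(p_X\circ f,X)\circ s_f$, where $s_f:Y\sr (p_X\circ f)^*(X)$ is the section of the projection $(p_X\circ f)^*(X)\sr Y$ induced by $f$ and the universal property of the canonical square; iterating this down to $ft^n(X)=pt$ (the base case being the unique morphism $Y\sr pt$) expresses $f$ through a sequence of sections together with the canonical operations $q(-,-)$, $f^*(-)$ and $p_{-}$. Since $B_n(CC)=Ob_n(CC)$ and $\wt B_{n+1}(CC)=\wt{Ob}_n(CC)$, every C-system homomorphism is determined by its restriction to objects and to sections, hence by the induced homomorphism of unital pre-B-systems. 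This yields faithfulness at once, and the same reconstruction shows that the objects, sections and all morphisms of $CC$ are recovered from $uB(CC)$, giving injectivity on objects.

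For fullness I would use the graph construction together with Theorem \ref{2014.06.26.th1}. First form the product C-system $CC\times CC'$, whose objects are pairs of equal length and whose underlying category, together with $ft$, $pt$, the projections and the canonical squares, is computed componentwise; a direct check then produces a canonical isomorphism $uB(CC\times CC')\cong uB(CC)\times uB(CC')$ of unital pre-B-systems, since the sorts $B_n,\wt B_{n+1}$ and the operations $T,\wt T,S,\wt S,\delta$ are all formed componentwise. Given a homomorphism $\psi:uB(CC)\sr uB(CC')$, its graph $\Gamma_\psi=\{(b,\psi(b))\}$ is a unital sub-pre-B-system of this product, because the graph of a homomorphism of algebras for an essentially algebraic theory is a subalgebra of the product, and the first projection $\Gamma_\psi\sr uB(CC)$ is an isomorphism with inverse $b\mapsto(b,\psi(b))$.

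Now apply Theorem \ref{2014.06.26.th1} to $CC\times CC'$: the sub-pre-B-system $\Gamma_\psi$ corresponds to a C-subsystem $D\hookrightarrow CC\times CC'$ with $uB(D)=\Gamma_\psi$. I claim the restricted projection $\mathrm{pr}_1:D\sr CC$ is an isomorphism of C-systems. It is a homomorphism of C-systems, and on objects and on sections it agrees with the isomorphism $\Gamma_\psi\sr uB(CC)$, hence is bijective there; by the reconstruction principle, a C-system homomorphism that is bijective on objects and sections is bijective on all morphisms, so $\mathrm{pr}_1$ is invertible. Setting $\phi:=\mathrm{pr}_2\circ\mathrm{pr}_1^{-1}:CC\sr CC'$ produces a C-system homomorphism, and tracing the correspondence of Theorem \ref{2014.06.26.th1} back through $uB$ shows $uB(\phi)=\psi$, which proves fullness.

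The main obstacle is the passage, via Theorem \ref{2014.06.26.th1}, from ``bijective on objects and sections'' to ``isomorphism of C-systems'' — that is, making the reconstruction principle fully rigorous and verifying that C-subsystems are genuinely determined by their objects and sections in the manner that theorem requires. This is precisely the point where compatibility of $\psi$ with $T$, $S$ and $\delta$ is used: these operations encode weakening, substitution and the unit (diagonal) sections, so preservation of them is what forces the reconstructed correspondence on morphisms to respect the canonical squares $q(f,X)$ and the pullbacks $f^*(X)$. A secondary and more routine obstacle is the explicit verification that $uB$ preserves the product, i.e.\ that the componentwise description of $CC\times CC'$ induces exactly the componentwise operations on $uB(CC)\times uB(CC')$.
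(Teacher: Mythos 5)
Your proposal is correct in substance, and for faithfulness it coincides with the paper's argument: morphisms are determined by their section components $s_i(f)$, and C-system homomorphisms commute with forming these components. For fullness, however, you take a genuinely different route. The paper argues by direct reconstruction: $Mor(CC)$, viewed inside $\amalg_{n,m} B_n\times B_m\times\wt{B}_{n+1}^m$, is cut out by equations in the B-operations (Lemma \ref{2009.11.07.l1}), and identities, composition, the pullbacks $f^*(X)$, $f^*(s)$ and the morphisms $q(f,X)$ are likewise expressed in B-terms (Lemmas \ref{2009.11.10.l1}--\ref{2009.11.10.l4b}); hence any homomorphism $\psi$ of unital pre-B-systems tautologically induces a C-system homomorphism $\phi$ with $uB(\phi)=\psi$. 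You instead form the product C-system $CC\times CC'$, check $uB(CC\times CC')\cong uB(CC)\times uB(CC')$, realize the graph $\Gamma_\psi$ as a unital sub-pre-B-system (this part is sound: a partial operation of the product is defined on graph elements exactly when its first components satisfy the domain condition, and $\psi$ being a homomorphism then keeps the value in the graph), and apply Theorem \ref{2014.06.26.th1} to obtain a C-subsystem $D$ of $CC\times CC'$ with $uB(D)=\Gamma_\psi$. This is an elegant reuse of the subsystem correspondence, which the paper states but never uses in this proof; its price is the (routine, as you say) construction of products of C-systems and the verification that $uB$ preserves them. Be clear-eyed, though, that the step you flag as the main obstacle --- that $\mathrm{pr}_1:D\sr CC$, being bijective on objects and on sections, is an isomorphism of C-systems --- is not a peripheral verification but is precisely the reconstruction principle, i.e.\ the entire content of the paper's chain of lemmas: injectivity on morphisms needs that the $s_i(f)$ determine $f$, and surjectivity needs that morphism-hood of a tuple $(Y,X,s_1,\dots,s_m)$ is characterized by equations in $T_n$ and $S$ (Lemma \ref{2009.11.07.l1}), so that it transfers along the pre-B-system isomorphism $uB(\mathrm{pr}_1)$. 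So your route does not bypass the paper's lemmas; it relocates them, trading the paper's explicit construction of $\phi$ for Theorem \ref{2014.06.26.th1} plus the graph trick. What the paper's more explicit route buys in exchange is the stock of formulas (identities via $\delta$ and $\wt{T}$, and $q(f,X)$, $f^*$ in B-terms) that point directly at Problem \ref{2014.10.10.prob2}, the recovery of a C-system from a unital B-system.
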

The proof follows from the lemmas below that show that a C-system can be reconstructed from the associated unital pre-B-system. 

We start by introducing intermediate concepts of a B0-systems.
\begin{definition}
\llabel{2014.10.16.def1.fromold}
\llabel{2014.10.16.def1}
A non-unital pre-B-system is called a non-unital B0-system if the following conditions hold:
\begin{enumerate}
\item for all $X\in B_0$ one has $X=pt$.
\item for $Y\in B_{n+1}$, $X\in B_{m+1}$ such that $ft(Y)=ft^{m+1-n}(X)$ and $m\ge n\ge 0$ one has:
\begin{eq}\llabel{oldeq1}
ft(T(Y,X))=\left\{
\begin{array}{ll}
T(Y,ft(X))&\mbox{\rm if $m>n$}\\
Y&\mbox{\rm if $m=n$}
\end{array}
\right.
\end{eq}
\item for $Y\in B_{n+1}$, $r\in \wt{B}_{m+1}$ such that $ft(Y)=ft^{m+1-n}\partial(r)$ and $m\ge n\ge 0$ one has:
\begin{eq}
\partial(\wt{T}(Y,r))=T(Y,\partial(r))
\end{eq}
\item for $s\in \wt{B}_{n+1}$, $X\in \wt{B}_{m+2}$ such that $\partial(s)=ft^{m+1-n}(X)$ and $m\ge n\ge 0$ one has:
\begin{eq}
ft(S(s,X))=\left\{
\begin{array}{ll}
S(s,ft(X))&\mbox{\rm if $m>n$}\\
ft(Y)&\mbox{\rm if $m=n$}
\end{array}
\right.
\end{eq}
\item for $s\in \wt{B}_{n+1}$, $r\in \wt{B}_{m+2}$ such that $\partial(s)=ft^{m+1-n}\partial(r)$ and $m\ge n\ge 0$ one has:
\begin{eq}
\partial(\wt{S}(s,r))=S(s,\partial(r))
\end{eq}
\item
\end{enumerate}
\end{definition}
\begin{definition}
\llabel{2014.10.20.def2}
A unital pre-B-system is called a unital B0-system if the underlying non-unital pre-B-system is 
a non-unital B0-system and for all $i\ge 0$, $X\in B_{n+1}$ one has
\begin{eq}
\llabel{2009.12.27.eq1}
\partial(\delta(X))=T(X,X)
\end{eq}
\end{definition}
\begin{lemma}
\llabel{l2014.10.10.l1}
Let $B$ be a unital pre-B-system of the form $uB(CC)$. Then $B$ is a unital B0-system.
\end{lemma}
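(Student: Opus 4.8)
The plan is to verify, one at a time, each defining equality of a unital B0-system (the conditions of Definitions \ref{2014.10.16.def1} and \ref{2014.10.20.def2}) for $B=uB(CC)$, by substituting the definitions of $T$, $\wt{T}$, $S$, $\wt{S}$ and $\delta$ given above and reducing each equality to a standard identity for the C-system operations $f^*(X,i)$ and $q(f,X,i)$. The one identity that carries almost all the weight is the compatibility of $ft$ with iterated pullback,
$$ft(f^*(X,i))=f^*(ft(X),i-1)\ssp(i\ge 1),$$
together with the boundary value $f^*(X,0)=Y$ for $f:Y\sr ft^0(X)$; both follow by a straightforward induction on $i$ from the inductive definition of $f^*(-,i)$ and $q(-,-,i)$ recalled before the statement, and I would expect them to be already available in \cite{Csubsystems}. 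A second, essentially formal, input is that the pullback of a section commutes with $\partial$: by construction $f^*(s,i)$ is a section of $f^*(X,i)$, where $X=\partial(s)$, so that $\partial(f^*(s,i))=f^*(\partial(s),i)$.

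The first condition is immediate, since $B_0(CC)=Ob_0(CC)$ is the set of objects of length $0$, of which $pt$ is the unique one. For the $T$-equality I would write $T(Y,X)=p_Y^*(X,m+1-n)$ and apply the displayed identity: when $m>n$ the index $m+1-n$ is at least $2$, so $ft(T(Y,X))=p_Y^*(ft(X),m-n)$, which is exactly $T(Y,ft(X))$ once one notes that $ft(X)\in B_m$ shifts the upper index from $m$ to $m-1$; when $m=n$ the index is $1$ and $ft(T(Y,X))=p_Y^*(ft(X),0)=Y$, the domain of $p_Y$. The $S$-equality is verified in exactly the same way, with $p_Y$ replaced by the section $s$ viewed, through the equality $\partial(s)=ft^{m+1-n}(X)$, as a morphism $ft^{m+2-n}(X)\sr ft^{m+1-n}(X)$; here the value at $m=n$ is $s^*(ft(X),0)=ft(\partial(s))$, so that the symbol $ft(Y)$ appearing on the right of the $S$-equality should read $ft(\partial(s))$.

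The $\wt{T}$- and $\wt{S}$-equalities and the unital equality (\ref{2009.12.27.eq1}) are then essentially definitional. Since $\wt{T}(Y,r)=p_Y^*(r,m+1-n)$ is a section of $p_Y^*(\partial(r),m+1-n)$, applying $\partial$ yields $p_Y^*(\partial(r),m+1-n)=T(Y,\partial(r))$, and symmetrically $\partial(\wt{S}(s,r))=s^*(\partial(r),m+1-n)=S(s,\partial(r))$. For the unital equality, $\delta(X)$ is by definition the diagonal section of $p_X^*X\sr X$, and $\partial$ of a section is the object over which it sits, so $\partial(\delta(X))=p_X^*X=p_X^*(X,1)$; this equals $T(X,X)$ because $T(X,X)$ is only defined when $m=n$, where the index $m+1-n$ is $1$.

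The main obstacle is not any individual equality but the uniform bookkeeping of the length and index shifts — in particular keeping straight the two regimes $m>n$ and the boundary $m=n$, where $f^*(-,1)$ collapses to a single pullback and $f^*(-,0)$ to a domain — and making sure the C-system identities are invoked in precisely the form stated above. All the genuine mathematical content sits in the pullback-stability of $ft$ and $\partial$; once those are granted the verification is a routine, if somewhat lengthy, case analysis.
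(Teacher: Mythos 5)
Your proposal is correct and is precisely the ``straightforward'' verification that the paper's proof (which consists of the single word \emph{Straightforward}) leaves to the reader: unwinding $T$, $\wt{T}$, $S$, $\wt{S}$, $\delta$ into the pullback operations $f^*(-,i)$ and reducing everything to $ft(f^*(X,i))=f^*(ft(X),i-1)$, $f^*(X,0)=Y$, and the fact that pulled-back sections satisfy $\partial(f^*(s,i))=f^*(\partial(s),i)$. You also correctly note that the value $ft(Y)$ in condition (4) of Definition \ref{2014.10.16.def1} should read $ft(\partial(s))$, since no $Y$ is bound there; this is a typo in the paper, and your verification handles it in the only sensible way.
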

\begin{proof}
Straightforward.
\end{proof}

From now on in this section we assume that we consider a unital B0-system. Let us denote by 
$$T_j:(B_{{n}+j})_{ft^j}\times_{ft^{{m}+1-{n}}} (B_{{m}+1})\sr B_{{m}+1+j}$$
$$\wt{T}_j:(B_{{n}+j})_{ft^j}\times_{ft^{{m}+1-{n}}\partial} (\wt{B}_{{m}+1})\sr \wt{B}_{{m}+1+j}$$
the maps which are defined inductively by 
\begin{eq}
T_j(Y,X)=\left\{
\begin{array}{ll}
X&\mbox{\rm if $j=0$}\\
T(Y,T_{j-1}(ft(Y),X))&\mbox{\rm if $j>0$}\\
\end{array}
\right.
\,\,\,\,\,\,\,\,\,\,\,\,
\wt{T}_j(Y,s)=\left\{
\begin{array}{ll}
s&\mbox{\rm if $j=0$}\\
\wt{T}(Y,\wt{T}_{j-1}(ft(Y),s))&\mbox{\rm if $j>0$}\\
\end{array}
\right.
\end{eq}
Note that for any $i=0,\dots,j$ we have 
$$T_j(Y,X)=T_i(Y,T_{j-i}(ft^i(Y),X))$$
and
$$\wt{T}_j(Y,s)=\wt{T}_i(Y,\wt{T}_{j-i}(ft^i(Y),s))$$
\begin{lemma}
\llabel{Tnft}
One has 
$$T_j(Y,ft(X))=ft(T_j(Y,X))$$
\end{lemma}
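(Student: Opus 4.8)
The plan is to induct on $j$, peeling off the outermost $T$ in the inductive definition of $T_j$ and commuting $ft$ inward by means of the B0-system axiom (\ref{oldeq1}). Throughout, write $Y\in B_{n+j}$ and $X\in B_{m+1}$ with the compatibility $ft^j(Y)=ft^{m+1-n}(X)$. First I would record a standing observation about levels: for $j\ge 1$ the left-hand side $T_j(Y,ft(X))$ is defined only when $m>n$, because $ft(X)$ sits at level $m$ and the innermost application of $T_{j-1}$ requires its second argument to stay at level $\ge n+1$. Thus $m>n$ may be assumed whenever $j\ge 1$, and this is precisely what forces every use of (\ref{oldeq1}) into its first (non-degenerate) branch.

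The base case $j=0$ is immediate: by the definition of $T_0$ both sides equal $ft(X)$.

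For the inductive step I would assume the identity for $j-1$ and unfold the definition to
$$T_j(Y,ft(X))=T\bigl(Y,\,T_{j-1}(ft(Y),ft(X))\bigr).$$
The pair $(ft(Y),X)$ satisfies the same compatibility condition as $(Y,X)$, since $ft^{j-1}(ft(Y))=ft^j(Y)$, so the inductive hypothesis rewrites the inner term as $ft\bigl(T_{j-1}(ft(Y),X)\bigr)$. Setting $Z=T_{j-1}(ft(Y),X)\in B_{m+j}$, this gives $T_j(Y,ft(X))=T(Y,ft(Z))$, whereas by definition $ft(T_j(Y,X))=ft(T(Y,Z))$. Hence the whole lemma reduces to the single identity $T(Y,ft(Z))=ft(T(Y,Z))$.

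This final identity is exactly the first branch of (\ref{oldeq1}) applied to $(Y,Z)$. Here $Y\in B_{(n+j-1)+1}$ and $Z\in B_{(m+j-1)+1}$, so the relevant indices are $n+j-1$ and $m+j-1$; the compatibility hypothesis of (\ref{oldeq1}) for this pair is nothing but the condition under which $T(Y,Z)=T_j(Y,X)$ is already defined, and is therefore automatic. The branch condition ``$m>n$'' becomes $m+j-1>n+j-1$, i.e.\ $m>n$, which holds by the standing assumption, so (\ref{oldeq1}) yields $ft(T(Y,Z))=T(Y,ft(Z))$ and the induction closes. The only delicate point --- and the thing I would check most carefully --- is precisely this index bookkeeping guaranteeing $m>n$ at every stage, so that the degenerate branch $ft(T(Y,X))=Y$ of (\ref{oldeq1}) is never triggered; everything else is a routine unfolding of definitions.
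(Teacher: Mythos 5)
Your proof is correct and follows essentially the same route as the paper's: induction on $j$, unfolding $T_j(Y,ft(X))=T(Y,T_{j-1}(ft(Y),ft(X)))$, applying the inductive hypothesis to the pair $(ft(Y),X)$, and closing with the non-degenerate branch of (\ref{oldeq1}). Your explicit index bookkeeping (checking that $m>n$ holds at every stage, so the degenerate branch $ft(T(Y,X))=Y$ is never invoked) is a detail the paper's terse proof passes over, and it is a worthwhile addition.
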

\begin{proof}
For $n=0$ the statement is obvious. For $n>0$ we have by induction on $j$
$$T_j(Y,ft(X))=T(Y,T_{j-1}(ft(Y),ft(X)))=T(Y,ft(T_{j-1}(ft(Y),X)))=$$
$$=ft(T(Y,T_{j-1}(ft(Y),X)))=ft(T_j(Y,X)).$$
\end{proof}
Let $f:Y\sr X$ be a morphism such that $Y\in B_n$ and $X\in B_m$. Define a sequence $(s_1(f),\dots,s_m(f))$ of elements of $\wt{B}_{n+1}$ inductively by the rule 
$$(s_1(f),\dots,s_m(f))=(s_1(ft(f)),\dots,s_{m-1}(ft(f)),s_f)=(s_{ft^{m-1}(f)},\dots,s_{ft(f)},s_f)$$
where $ft(f)=p_X f$ and $s_f$ is the $s$-operation of \cite[Def. 2.2]{Csubsystems}.  For $m=0$ we start with the empty sequence. This construction can be illustrated by the following diagram for $f:Y\sr X$ where $X\in B_4$:
\begin{eq}
\begin{CD}
Y @>s_4(f)>>   Z_{4,3} @>>> Z_{4,2} @>>> Z_{4,1} @>>> T_n(Y,X) @>>> X\\
@. @VVV @VVV  @VVV @VVV @VVV\\
{} @. Y @>s_{3}(f)>> Z_{3,2} @>>> Z_{3,1} @>>> T_n(Y,ft(X))@>>> ft(X)\\
@. @. @VVV @VVV @VVV @VVV\\
{} @. {} @.  Y @>s_{2}(f)>> Z_{2,1}@>>> T_n(Y,ft^2(X)) @>>>  ft^2(X)\\
@. @. @.  @VVV @VVV @VVV\\
{} @. {} @. {} @. Y @>s_1(f)>>  T_n(Y,ft^{3}(X)) @>>>  ft^{3}(X)\\
@. @. @.  @. @VVV @VVV\\
{} @. {} @. {} @.  @.  Y @>>>  pt\\
\end{CD}
\end{eq}
which is completely determined by the condition that the squares are the canonical ones and the composition of morphisms in the $i$-th arrow from the top is $ft^i(f)$. For the objects $Z_i^j$ we have:
\begin{eq}
\begin{array}{lll}
Z_{4,1}=S(s_1(f), T_n(Y,X))&Z_{4,2}=S(s_2(f),Z_{4,1})&Z_{4,3}=S(s_3(f),Z_{4,2})\\\\
Z_{3,1}=S(s_1(f),T_n(Y,ft(X)))&Z_{3,2}=S(s_2(f),Z_{3,1})&\\\\
Z_{2,1}=S(s_1(f),T_n(Y,ft^2(X)))&&\\
\end{array}
\end{eq}

A simple inductive argument similar to the one in the proof of \cite[Lemma 4.1]{Csubsystems} show that if $f,f':Y\sr X$ are two morphisms such that $X\in B_m$ and $s_i(f)=s_i(f')$ for $i=1,\dots,m$ then $f=f'$.  Therefore, we may consider the set $Mor(CC)$ of morphisms of $CC$ as a subset in $\amalg_{n,m\ge 0} B_n\times B_m\times \wt{B}_{n+1}^m$.

Let us show how to describe this subset in terms of the operations introduced above. 
\begin{lemma}
\llabel{2009.11.07.l1}
An element $(Y,X,s_1,\dots,s_m)$ of $B_n\times B_m\times \wt{B}_{n+1}^m$ corresponds to a morphism if and only if the element $(Y,ft(X),s_1,\dots,s_{m-1})$ corresponds to a morphism and $\partial(s_m)=Z_{m,m-1}$ where $Z_{m,i}$ is defined inductively by the rule:
$$Z_{m,0}=T_n(Y,X)\,\,\,\,\,\,\,\,\,\,Z_{m,i+1}=S(s_{i+1},Z_{m,i})$$
\end{lemma}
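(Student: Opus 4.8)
The plan is to pass through the isomorphism $B=uB(CC)$ and translate the $B$-system operations back into the operations of $CC$: under this translation $T_n(Y,-)$ is the total weakening along the projection $Y\sr pt$, $S(s,-)$ is the substitution $s^*(-,m+1-n)$ along the section underlying $s$, and $\partial$ sends an element of $\wt{B}_{n+1}$ to the object of $B_{n+1}$ of which it is a section. The conceptual backbone is the universal property built into the $s$- and $q$-operations of a $C$-system: for $X$ of length $m\ge 1$, giving $f:Y\sr X$ is the same as giving $ft(f)=p_Xf:Y\sr ft(X)$ together with a section $s_f$ of the pullback $ft(f)^*(X)\sr Y$, the morphism being recovered as $f=q(ft(f),X)\circ s_f$. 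Having reduced the lemma to this universal property, the whole problem becomes the identification of the object $Z_{m,m-1}$ with the pullback $g^*(X)$, where $g:Y\sr ft(X)$ is the morphism attached to the truncated tuple.

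First I would dispose of the clause about the truncation. Since $(s_1(f),\dots,s_m(f))=(s_1(ft(f)),\dots,s_{m-1}(ft(f)),s_f)$ by construction, the truncation $(Y,ft(X),s_1,\dots,s_{m-1})$ of the tuple of a morphism $f$ is exactly the tuple of $ft(f)$, so in the forward direction this clause is automatic, and in the backward direction it furnishes a morphism $g:Y\sr ft(X)$ whose section sequence is $(s_1,\dots,s_{m-1})$ (uniqueness of $g$ being the injectivity statement recorded before the lemma). With $g$ in hand, the remaining condition $\partial(s_m)=Z_{m,m-1}$ should be read as the assertion that $\partial(s_m)$ is the object $g^*(X)$, i.e. that $s_m$ is a section of $g^*(X)\sr Y$; by the universal property above this section, together with $g$, assembles into a unique $f:Y\sr X$ with $ft(f)=g$ and $s_f=s_m$, whose tuple is precisely $(Y,X,s_1,\dots,s_m)$. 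Thus both directions follow once the identification of $Z_{m,m-1}$ is established.

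The key step, which I expect to carry all the weight, is the identity
$$Z_{m,i}=g_i^*(X,m-i),\qquad g_i:=ft^{m-1-i}(g):Y\sr ft^{m-i}(X),$$
proved by induction on $i$ from $0$ to $m-1$; note $g_{m-1}=g$, so the case $i=m-1$ gives $Z_{m,m-1}=g^*(X,1)=g^*(X)$ as required. For the base case $Z_{m,0}=T_n(Y,X)$ I would use that $T_n$ is weakening along the full projection $g_0:Y\sr pt$, so $Z_{m,0}=g_0^*(X,m)$. For the inductive step I would combine the factorisation $g_{i+1}=q(g_i,ft^{m-i-1}(X))\circ s_{g_{i+1}}$ with two base-change identities of \cite{Csubsystems}: the functoriality $(a\circ b)^*(X,k)=b^*(a^*(X,k),k)$ and the transitivity $q(g_i,W)^*(X,k)=g_i^*(X,k+1)$ for $W=ft^{m-i-1}(X)$ and $k=m-i-1$. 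Since $S(s_{g_{i+1}},-)$ is exactly $s_{g_{i+1}}^*(-,m-i-1)$, these give $Z_{m,i+1}=S(s_{g_{i+1}},Z_{m,i})=s_{g_{i+1}}^*(g_i^*(X,m-i),m-i-1)=g_{i+1}^*(X,m-i-1)$, completing the induction. The main obstacle is purely the index bookkeeping: one must check that the level $m+1-n$ implicit in $S$ and in $T_n$ matches the levels $m-i$ appearing in the $C$-system pullbacks $g_i^*(X,m-i)$, and invoke the identities of \cite{Csubsystems} in precisely the shifted form used above; once the indices are aligned the verification is routine.
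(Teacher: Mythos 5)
Your proof is correct and is essentially the paper's own argument made explicit: the paper's proof is just a pointer to the displayed diagram of canonical pullback squares (the $X\in B_4$ example preceding the lemma), which encodes precisely your decomposition $f=q(ft(f),X)\circ s_f$ together with the identification of the objects $Z_{m,i}$ as iterated pullbacks. Your induction $Z_{m,i}=g_i^*(X,m-i)$, carried out via the functoriality identity $(a\circ b)^*(X,k)=b^*(a^*(X,k),k)$ and the transitivity identity $q(g_i,W)^*(X,k)=g_i^*(X,k+1)$, is exactly the index bookkeeping that the paper dismisses as ``straightforward from the example considered above.''
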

\begin{proof}
Straightforward from the example considered above. 
\end{proof}
Let us show now how to identify the canonical morphisms $p_{X,i}:X\sr ft^i(X)$ and in particular the identity morphisms. 
\begin{lemma}
\llabel{2009.11.10.l1}
Let $X\in B_m$ and $0\le i\le m$. Let $p_{X,i}:X\sr ft^i(X)$ be the canonical morphism. Then one has:
$$s_j(p_{X,i})=\wt{T}_{m-j}(X,\delta_{ft^{m-j}(X)})\,\,\,\,\,\,\,\,\,\,    j=1,\dots, m-i $$
\end{lemma}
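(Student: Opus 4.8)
The plan is to exploit that the right-hand side $\wt{T}_{m-j}(X,\delta_{ft^{m-j}(X)})$ does not depend on $i$, and to reduce the whole statement to the single identity $s_{p_{X,k}}=\wt{T}_k(X,\delta_{ft^k(X)})$ (the case $j=m-k$, $i=k$). First I would record $ft(p_{X,i})=p_{X,i+1}$ and substitute it into the recursion defining $(s_1(f),\dots,s_m(f))$: since the sequence of $p_{X,i}$ arises from that of $ft(p_{X,i})=p_{X,i+1}$ by appending $s_{p_{X,i}}$, one obtains $s_j(p_{X,i})=s_j(p_{X,i+1})$ for $j<m-i$ together with $s_{m-i}(p_{X,i})=s_{p_{X,i}}$. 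Iterating the first relation until $j$ reaches the final index gives $s_j(p_{X,i})=s_{p_{X,m-j}}$ for every admissible $i$, so the lemma follows once the displayed identity is known with $k=m-j$.

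Next I would rewrite $\wt{T}_k(X,-)$ as a pullback. From $\wt{T}(X,-)=p_X^*(-)$, the inductive clause $\wt{T}_k(X,-)=\wt{T}(X,\wt{T}_{k-1}(ft(X),-))$, and the composition law $p_{X,k}=p_{ft(X),k-1}\circ p_X$ for canonical projections, a short induction yields $\wt{T}_k(X,-)=p_{X,k}^*(-)$. The target identity then becomes the assertion in $CC$ that $s_{p_{X,k}}=p_{X,k}^*(\delta_{ft^k(X)})$, namely that the section attached to the projection $p_{X,k}:X\sr ft^k(X)$ is the pullback along $p_{X,k}$ of the diagonal section of $ft^k(X)$.

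This equality is the crux, and I would deduce it from the base-change identity $s_g=g^*(\delta_{\mathrm{cod}(g)})$ for an arbitrary morphism $g:Y\sr Z$ of $CC$, specialised to $g=p_{X,k}$. The mechanism is that $\delta_Z$ is exactly the section $s_{\mathrm{id}_Z}$ corresponding to $\mathrm{id}_Z$ through the canonical pullback square of $p_Z$, so that pulling it back along $g$ produces the section corresponding to $\mathrm{id}_Z\circ g=g$; formally this is the compatibility $g^*(s_h)=s_{h\circ g}$ of the section construction with pullback, using $g^*(p_Z^*(Z))=(p_Zg)^*(Z)=(ft(g))^*(Z)$ and the functoriality of $q(-,-)$. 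The base case $k=0$ is the transparent statement $s_{\mathrm{id}_X}=\delta_X$.

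The main obstacle is precisely this base-change identity: it is the only step that leaves the formal manipulation of the $s$-sequence and the $\wt{T}$-operations and must be carried out inside $CC$ with the definitions of $s_f$, $\delta$, and the reindexing morphisms $q(f,X,i)$, where the equality of the two candidate sections is forced by the universal property of the defining pullbacks. I expect this verification to be an instance of the inductive argument already used in \cite[Lemma 4.1]{Csubsystems}, while the remaining steps—the independence from $i$, the identification $\wt{T}_k(X,-)=p_{X,k}^*$, and the reassembly of the components—are routine bookkeeping.
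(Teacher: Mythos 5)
Your proof is correct and its skeleton is the same as the paper's: both exploit $ft(p_{X,i})=p_{X,i+1}$ to see that all but the last entry of the $s$-sequence of $p_{X,i}$ are entries of the $s$-sequence of $p_{X,i+1}$ (the paper packages this as an induction on $m-i$, you iterate the relation directly, which is equivalent), thereby reducing the lemma to the single identity $s_{p_{X,k}}=\wt{T}_k(X,\delta_{ft^k(X)})$. The only real divergence is how this crux is certified. The paper does it in one stroke, drawing the commutative diagram in $CC$ that exhibits $s_{p_{X,i}}$ as the pullback of $\delta_{ft^i(X)}$ along $p_{X,i}$ through the composite canonical squares, leaving the identification of that composite pullback with $\wt{T}_i(X,-)$ implicit. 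You split the crux into two explicit pieces: $\wt{T}_k(X,-)=p_{X,k}^*(-)$ by induction on $k$ (correct in the case needed, where the section being pulled back sits directly over the codomain of the projection), and $s_{p_{X,k}}=p_{X,k}^*(\delta_{ft^k(X)})$ as the specialization $f=\mathrm{id}$ of $s_{fg}=g^*s_f$ combined with $\delta_Z=s_{\mathrm{id}_Z}$. One small correction: that last equation is not an instance of the inductive argument of \cite[Lemma 4.1]{Csubsystems}, which concerns the injectivity of $f\mapsto (s_1(f),\dots,s_m(f))$; it is a basic C-system identity (an axiom, or immediate from the canonical squares being pullbacks), and it is precisely the equation this paper invokes without proof in Lemma \ref{2009.11.10.l3}, so relying on it involves no circularity and is consistent with the paper's own conventions. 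Your organization is slightly more modular and yields the reusable statement $s_g=g^*(\delta_{\mathrm{cod}(g)})$ for arbitrary $g$; the paper's diagram is more self-contained but special to $g=p_{X,i}$.
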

\begin{proof}
Let us proceed by induction on $m-i$. For $i=m$ the assertion is trivial. Assume the lemma proved for $i+1$.  Since $ft(p_{X,i})=p_{X,i+1}$ we have $s_j(p_{X,i})=s_j(p_{X,i+1})$ for $j=1,\dots, m-i-1$. It remains to show that
\begin{eq}
\llabel{2009.11.10.eq1}
s_{m-i}(p_{X,i})=\wt{T}_{i}(X,\delta_{ft^{i}(X)})
\end{eq}
By definition $s_{m-i}(p_{X,i})=s_{p_{X,i}}$ and (\ref{2009.11.10.eq1}) follows from the commutative diagram:
$$
\begin{CD}
X @>>> ft^i(X)\\
@Vs_pVV @VV\delta_{ft^i(X)}V\\
p_{X,i+1}^*(ft^i(X)) @>>> p_{ft^i(X)}^*(ft^i(X)) @>>> ft^i(X)\\
@VVV @VVV @VVp_{ft^i(X)}V\\
 X @>>> ft^i(X) @>>> ft^{i+1}(X)
\end{CD}
$$
where $p=p_{X,i}$.
\end{proof}
\begin{lemma}
\llabel{2009.11.10.l2}
Let $(X,s)\in \wt{B}_{m+1}$, $Y\in B_n$ and $f:Y\sr ft(X)$. Define inductively
$(f,i)^*(s)\in \wt{B}_{n+m+1-i}$ by the rule
$$(f,0)^*(s)=\wt{T}_n(Y,s)$$
$$(f,i+1)^*(s)=\wt{S}(s_{i+1}(f), (f,i)^*(s))$$
Then $f^*(s)=(f,m)^*(s)$.
\end{lemma}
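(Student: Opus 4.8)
The plan is to deduce the statement from the more uniform identity
$$(f,i)^*(s) \;=\; \big(ft^{m-i}(f)\big)^*(s,\,m-i+1), \qquad 0\le i\le m,$$
where the right-hand side denotes the C-system pull-back of the section $s$ along the canonical morphism $q(ft^{m-i}(f),ft(X),m-i)$, in the sense recalled before Lemma \ref{2009.11.07.l1}. Taking $i=m$ gives $(f,m)^*(s)=f^*(s,1)=f^*(s)$, so it suffices to prove this identity by induction on $i$. Throughout I work inside $B=uB(CC)$ and identify morphisms with their tuples, reading $\wt{T}_n(Y,-)$ as the iterated weakening $p_Y^*$ and $\wt{S}(s',-)$ as pull-back along the section $s'$, as prescribed by the construction of these operations on $uB(CC)$.

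For the base case $i=0$ the morphism $ft^{m}(f)$ is the unique arrow $Y\sr ft^{m+1}(X)=pt$, hence the canonical projection; its pull-back of $s$ is the total weakening of $s$ by $Y$, which is exactly $\wt{T}_n(Y,s)=(f,0)^*(s)$ by the definition of $\wt{T}_n$. For the inductive step I set $h=ft^{m-i-1}(f):Y\sr ft^{m-i}(X)$, so that $ft(h)=ft^{m-i}(f)$ and, by the defining formula $s_j(f)=s_{ft^{m-j}(f)}$, the section $s_{i+1}(f)$ is precisely the top $s$-operation $s_h$ of $h$. Using $(f,i+1)^*(s)=\wt{S}(s_{i+1}(f),(f,i)^*(s))$ together with the inductive hypothesis, the whole step collapses to the single identity
$$h^*(s,m-i)\;=\;\wt{S}\big(s_h,\,(ft(h))^*(s,m-i+1)\big).$$

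This is where the real content lies, and it is the main obstacle. I would obtain it from the canonical factorization $h=q(ft(h),ft^{m-i}(X))\circ s_h$ combined with the compatibility of the operations $q(-,W,j)$ with composition and with sections: pulling $s$ back along $q(h,ft(X),m-i-1)$ factors as first pulling back along $q(ft(h),ft(X),m-i)$ and then along the section $s_h$, the latter being $\wt{S}(s_h,-)$ by construction of $\wt{S}$ on $uB(CC)$. This is the section-level analogue of the object-level computation encoded in the diagram preceding Lemma \ref{2009.11.07.l1}, and I expect it to follow by the same straightforward induction on the number of tower levels used in \cite[Lemma 4.1]{Csubsystems}.

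The only remaining points are bookkeeping ones: matching the index $m-i$ of each pull-back with the height of $s$ in the tower over $ft(X)$, and checking the domain conditions $\partial(s_{i+1}(f))=ft^{\,\cdot\,}\partial((f,i)^*(s))$ so that every application of $\wt{S}$ is defined. These are routine once the factorization identity above is in hand, so the induction on $i$ closes and the case $i=m$ yields $f^*(s)=(f,m)^*(s)$.
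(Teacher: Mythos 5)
Your proposal is correct and takes essentially the same route as the paper: the paper's proof is precisely the diagram that encodes your induction, in which the intermediate sections $(f,i)^*(s)$ are the pullbacks of $s$ along the canonical morphisms $q(ft^{m-i}(f),ft(X),m-i)$, the base case is the weakening column (pullback along $Y\sr pt$) and each inductive step is one canonical square factoring $ft^{m-i-1}(f)$ through its section $s_{i+1}(f)$. Your explicit invariant $(f,i)^*(s)=\bigl(ft^{m-i}(f)\bigr)^*(s,\,m-i+1)$ just spells out what the paper leaves implicit in that diagram.
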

\begin{proof}
It follows from the diagram:
$$
\begin{CD}
Y @>s_m(f)>> * @>>> \dots @>>>* @>>> * @>>> ft(X)\\
@Vf^*(s)VV @VV(f,m-1)^*(s)V @. @VV(f,1)^*(s)V @VV(f,0)^*(s)V @VVsV\\
* @>>> * @>>> \dots @>>>* @>>> * @>>> X\\
@VVV @VVV @. @VVV @VVV @VVV\\
Y @>s_m(f)>> * @>>> \dots @>>>* @>>> * @>>> ft(X)\\
@. @VVV @. @VVV @VVV @VVV\\
{} @. Y  @>s_{m-1}(f)>> \dots @>>> * @>>> * @>>> ft^2(X)\\
@. @. @. @VVV @VVV @VVV\\
{} @. {} @. {} @. \dots @. \dots @. \dots \\
@. @. @. @VVV @VVV @VVV\\
 {} @. {} @. {} @. Y @>s_1(f)>> * @>>> ft^{m-1}(X)\\
@. @. @. @. @VVV @VVV\\
 {} @. {} @. {} @. {} @. Y @>>> pt
\end{CD}
$$
\end{proof}
\begin{lemma}
\label{2009.11.10.l3}
Let $g:Z\sr Y$, $f:Y\sr X$ and $X\in B_m$. Then $s_i(fg)=g^*s_i(f)$.
\end{lemma}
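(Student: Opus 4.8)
The plan is to reduce the statement to the single top section $s_m$ and then to settle that case directly from the universal property that defines the C-system $s$-operation. First I would record the compatibility of $ft$ on morphisms with precomposition: since $ft(f)=p_Xf$ by definition, one has $ft(fg)=p_X(fg)=(p_Xf)g=ft(f)\,g$, and iterating gives $ft^j(fg)=ft^j(f)\,g$ for all $j$. Together with the defining recursion $s_i(f)=s_i(ft(f))$ for $i<m$, this lets me argue by induction on $m=l(X)$. For $i<m$ we get $s_i(fg)=s_i(ft(fg))=s_i(ft(f)\,g)$, which by the inductive hypothesis applied to $ft(f):Y\sr ft(X)$ equals $g^*s_i(ft(f))=g^*s_i(f)$; the base case $m=0$ is the empty sequence. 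So everything reduces to the top case $s_m(fg)=g^*s_m(f)$, that is, $s_{fg}=g^*s_f$.

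For this case I would use that $s_f$ is, by its definition in \cite{Csubsystems}, the unique section of $p_{(p_Xf)^*X}:(p_Xf)^*X\sr Y$ satisfying $q(p_Xf,X)\,s_f=f$, and that a section of the projection $(p_Xfg)^*X\sr Z$ is determined by its composite with $q(p_Xfg,X)$, because the canonical square exhibiting $(p_Xfg)^*X$ is a pull-back. Both $s_{fg}$ and $g^*s_f$ are sections of this one projection, so it suffices to check that each composes with $q(p_Xfg,X)$ to give $fg$. For $s_{fg}$ this is exactly its defining property. For $g^*s_f$ I would paste the two canonical squares: using the factorization $q(p_Xfg,X)=q(p_Xf,X)\,q(g,(p_Xf)^*X)$ together with the defining identity $q(g,(p_Xf)^*X)\,(g^*s_f)=s_f\,g$ for the pull-back of a section, one obtains $q(p_Xfg,X)\,(g^*s_f)=q(p_Xf,X)\,s_f\,g=fg$. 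Hence the two sections coincide.

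The main obstacle is the pasting law for canonical pull-back squares, $q((p_Xf)g,X)=q(p_Xf,X)\,q(g,(p_Xf)^*X)$, together with the companion identity $g^*((p_Xf)^*X)=(p_Xfg)^*X$; these are the genuine C-system inputs (the functoriality of the $q$-construction, in the spirit of \cite[Lemma 4.1]{Csubsystems}), and essentially all the content sits there, the remainder being bookkeeping. An attractive alternative that bypasses the induction is to use the identity $s_h=h^*\delta_W$ expressing the section attached to $h:Y\sr W$ as the pull-back of the diagonal $\delta_W$: then $s_i(f)=s_{ft^{m-i}(f)}=(ft^{m-i}(f))^*\delta_{ft^{m-i}(X)}$, and $s_i(fg)=(ft^{m-i}(f)\,g)^*\delta=g^*(ft^{m-i}(f))^*\delta=g^*s_i(f)$ follows uniformly from functoriality of pull-back for every $i$ at once. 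I would first verify that $s_h=h^*\delta_W$ is available in the cited form before committing to this shorter route.
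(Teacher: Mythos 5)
Your proof is correct and takes essentially the same route as the paper: the paper's entire proof consists of citing exactly the two identities you isolate, $ft(fg)=ft(f)g$ and $s_{fg}=g^*s_f$, and your induction on $m$ merely unpacks how these yield $s_i(fg)=g^*s_i(f)$ for all $i$. The only difference is that you re-derive $s_{fg}=g^*s_f$ from the pull-back property of the canonical squares rather than quoting it as a known equation of C-systems from \cite[Def.~2.2]{Csubsystems}; that derivation is sound (in a C-system the canonical squares are indeed pull-backs), though note that $s_f$ is there given as primitive data satisfying axioms, so the ``unique section'' characterization you describe as its definition is really the equivalent pull-back property rather than the literal definition.
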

\begin{proof}
It follows immediately from the equations $s_{fg}=g^*s_f$ and $ft(fg)=ft(f)g$.
\end{proof}

\begin{lemma}
\llabel{2009.11.10.l4a}
Let $f:Y\sr ft(X)$ be a morphism, $Y\in B_n$ and $X\in B_{m+1}$. Define $(f,i)^*(X)$ inductively by the rule:
$$(f,0)^*(X)=T_n(Y,X)$$
$$(f,i+1)^*(X)=S(s_{i+1}(f),(f,i)^*(X))$$
Then $f^*(X)=(f,m)^*(X)$.
\end{lemma}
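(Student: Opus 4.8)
The plan is to prove this as the object-level twin of Lemma~\ref{2009.11.10.l2}: that lemma establishes exactly the same factorization for a section $s\in\wt{B}_{m+1}$, and the argument here is obtained by replacing the term-level operations $\wt{T}_n,\wt{S}$ throughout by their object-level counterparts $T_n,S$ and the section $s$ by the object $X$. Accordingly, the first thing I would write down is the object-level analogue of the staircase diagram used to prove Lemma~\ref{2009.11.10.l2}: a grid of canonical pullback squares whose bottom edge is $ft(X)\leftarrow ft^2(X)\leftarrow\cdots\leftarrow pt$, whose $j$-th row is obtained by pulling back along the section $s_j(f)$, and whose top-left corner is the C-system pullback $f^*(X)=f^*(X,1)$. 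Reading this diagram from the generic edge inward shows that $f^*(X)$ is computed by first forming the total weakening $T_n(Y,X)=(f,0)^*(X)$ and then successively substituting the sections $s_1(f),\dots,s_m(f)$, i.e. by the recursion $(f,i+1)^*(X)=S(s_{i+1}(f),(f,i)^*(X))$.

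For a more self-contained write-up I would instead argue by induction on $m=l(ft(X))$. In the base case $m=0$ the first B0-axiom forces $ft(X)=pt$, so $f$ is the unique morphism $Y\to pt$, and unwinding the inductive definition of $T_n$ as an $n$-fold iterate of the single weakening $T$ identifies $f^*(X)=p_{Y,n}^*(X,1)$ with $T_n(Y,X)=(f,0)^*(X)$. For the inductive step I would factor $f$ through its top section as $f=q(ft(f),ft(X))\circ s_f$, where $ft(f)=p_{ft(X)}f:Y\to ft^2(X)$ and $s_m(f)=s_f$. Functoriality of the C-system pullback then gives $f^*(X)=s_f^*\bigl(q(ft(f),ft(X))^*(X,1)\bigr)$, and the defining identity $g^*(X,2)=q(g,ft(X))^*(X,1)$ rewrites this as $f^*(X)=s_f^*\bigl((ft(f))^*(X,2)\bigr)=S(s_m(f),(ft(f))^*(X,2))$. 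It remains to identify $(ft(f))^*(X,2)$ with $(f,m-1)^*(X)$, for which I would apply the inductive hypothesis to the object $ft(X)\in B_m$ along the morphism $ft(f):Y\to ft(ft(X))$, noting that the $s$-sequence of $ft(f)$ is $(s_1(f),\dots,s_{m-1}(f))$ directly by the definition of the sequence $(s_1(f),\dots,s_m(f))$.

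The matching of levels is where the real work lies, and I expect it to be the main obstacle. The inductive hypothesis directly computes the level-one pullback $(ft(f))^*(ft(X))$, whereas the step above needs the level-two pullback $(ft(f))^*(X,2)$; these differ by a single application of $ft$, and likewise $(f,i)^*(X)$ and $(f,i)^*(ft(X))$ differ at the base by $T_n(Y,X)$ versus $T_n(Y,ft(X))$. I would reconcile the two using Lemma~\ref{Tnft}, which gives $T_n(Y,ft(X))=ft(T_n(Y,X))$, together with the $ft$-compatibility of $S$ recorded in Definition~\ref{2014.10.16.def1}, to prove by a subsidiary induction on $i$ that $(f,i)^*(ft(X))=ft\bigl((f,i)^*(X)\bigr)$; this is exactly the consistent one-level shift that makes the two pullback computations agree. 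Everything else is the routine diagram chase already carried out for Lemma~\ref{2009.11.10.l2}.
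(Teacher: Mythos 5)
Your opening plan --- the object-level staircase diagram --- is exactly the paper's proof: the paper disposes of this lemma with the words ``Similar to the proof of Lemma~\ref{2009.11.10.l2}'', and that proof is the diagram you describe. The problem is in the induction on $m$ that you say you would write \emph{instead}, and it sits precisely at the step you yourself flagged as the main obstacle. After the factorization $f=q(ft(f),ft(X))\circ s_f$ you must show $(ft(f))^*(X,2)=(f,m-1)^*(X)$. Your plan is to combine the inductive hypothesis, applied to $ft(X)\in B_m$ along $ft(f)$, with the subsidiary identity $(ft(f),i)^*(ft(X))=ft\bigl((f,i)^*(X)\bigr)$. Both statements are true (the subsidiary one follows, as you say, from Lemma~\ref{Tnft} and condition (4) of Definition~\ref{2014.10.16.def1}), but chaining them yields only
$$ft\bigl((ft(f))^*(X,2)\bigr)=(ft(f))^*(ft(X))=(ft(f),m-1)^*(ft(X))=ft\bigl((f,m-1)^*(X)\bigr),$$
i.e.\ the two objects you need to identify merely have the same image under $ft$. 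Since $ft$ is not injective --- many elements of $B_{n+2}$ lie over a given element of $B_{n+1}$ --- this does not identify them. The structural reason is that your inductive hypothesis only speaks about one-level pullbacks of objects of level $m$, whereas the inductive step produces the two-level iterated pullback $(ft(f))^*(X,2)$ of the level-$(m+1)$ object $X$; no amount of $ft$-shifting of the B-side recursions supplies any statement about that object.

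The repair is to strengthen the statement being proved rather than to patch across levels: prove by induction on $m=l(ft^k(X))$, uniformly in $k\ge 1$, that for every $f:Y\sr ft^k(X)$ one has $f^*(X,k)=(f,m)^*(X,k)$, where the recursion $(f,0)^*(X,k)=T_n(Y,X)$ and $(f,i+1)^*(X,k)=S(s_{i+1}(f),(f,i)^*(X,k))$ is the same as before (note that its base does not depend on $k$). The case $m=0$ is your base case; in the step, the factorization argument gives $f^*(X,k)=S(s_m(f),(ft(f))^*(X,k+1))$, and now the inductive hypothesis applies \emph{directly} to $ft(f):Y\sr ft^{k+1}(X)$, giving $(ft(f))^*(X,k+1)=(ft(f),m-1)^*(X,k+1)$; this recursion coincides term by term with $(f,m-1)^*(X,k)$ because $s_i(ft(f))=s_i(f)$ for $i\le m-1$ and both start from $T_n(Y,X)$. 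Taking $k=1$ yields the lemma, with no need for Lemma~\ref{Tnft} or the $ft$-shift at all. This strengthened statement is exactly what the full staircase diagram records in one picture --- all iterated pullbacks at all levels simultaneously --- which is why the paper's (and your first) diagrammatic plan goes through while the literal $k=1$ induction does not.
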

\begin{proof}
Similar to the proof of Lemma \ref{2009.11.10.l2}.
\end{proof}
\begin{lemma}
\llabel{2009.11.10.l4b}
Let $f:Y\sr ft(X)$ be a morphism, $Y\in B_n$ and $X\in B_{m+1}$. 
Then 
$$s_i(q(f,X))=\left\{
\begin{array}{ll}
\wt{T}(f^*X,s_i(f))&\mbox{\rm if $i\le m$}\\\\
\wt{T}(f^*X,\delta_X)&\mbox{\rm if $i=m+1$}
\end{array}
\right.
$$
\end{lemma}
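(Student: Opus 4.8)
The plan is to split the computation according to the two clauses and to reduce everything to the single structural identity $ft(q(f,X))=f\circ p_{f^*X}$, which expresses that the canonical square
$$
\begin{CD}
f^*X @>q(f,X)>> X\\
@Vp_{f^*X}VV @Vp_XVV\\
Y @>f>> ft(X)
\end{CD}
$$
commutes. Recall that the section sequence of a morphism $g$ into $X\in B_{m+1}$ is $(s_1(ft(g)),\dots,s_m(ft(g)),s_g)$, so that its first $m$ entries depend only on $ft(g)=p_Xg$ while the last is the genuine $s$-operation. Applying this with $g=q(f,X)$ and $ft(q(f,X))=f\circ p_{f^*X}$ immediately separates the two regimes: for $i\le m$ one has $s_i(q(f,X))=s_i(f\circ p_{f^*X})$, and for $i=m+1$ one has $s_{m+1}(q(f,X))=s_{q(f,X)}$.

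First I would dispatch the range $i\le m$. By Lemma \ref{2009.11.10.l3} applied to the composite $f\circ p_{f^*X}$ one obtains $s_i(f\circ p_{f^*X})=(p_{f^*X})^*s_i(f)$. It then remains only to recognise that pulling a section back along the canonical projection $p_{f^*X}:f^*X\sr ft(f^*X)=Y$ is a single weakening, i.e. that $(p_{f^*X})^*s_i(f)=\wt{T}(f^*X,s_i(f))$. This is precisely how $\wt{T}$ is defined on $uB(CC)$ (in the one-step case it sends $(Y',r)$ to $p_{Y'}^*(r,1)$), so the identification is a matter of unwinding that definition; the index bookkeeping is routine here because both $f^*X$ and $s_i(f)$ sit exactly one level above $Y$.

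The remaining, and genuinely substantive, case is $i=m+1$, where the claim reduces to computing the tautological section $s_{q(f,X)}$. Here I would invoke the universal characterisation of the $s$-operation together with the functoriality of $q$ from \cite{Csubsystems}, namely $q(f\circ p_{f^*X},X)=q(f,X)\circ q(p_{f^*X},f^*X)$. Since $\delta_{f^*X}$ is by definition the unique section with $q(p_{f^*X},f^*X)\circ\delta_{f^*X}=\mathrm{id}_{f^*X}$, composing the functoriality identity on the right with $\delta_{f^*X}$ gives $q(f\circ p_{f^*X},X)\circ\delta_{f^*X}=q(f,X)$; as $s_{q(f,X)}$ is the unique section with the same property, uniqueness forces $s_{q(f,X)}=\delta_{f^*X}$. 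The codomains match because $\partial(\delta_{f^*X})=T(f^*X,f^*X)=p_{f^*X}^*(f^*X)$ by (\ref{2009.12.27.eq1}). The last step is then to rewrite $\delta_{f^*X}$ in the stated form, using naturality of the diagonal under the canonical pullback along $q(f,X)$, exactly in the spirit of the top-section computation carried out in Lemma \ref{2009.11.10.l1}.

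I expect the main obstacle to be this final case. Unlike $i\le m$, it is not a formal transport of the data of $f$ but a genuine identification of $s_{q(f,X)}$ with a diagonal section, and it then requires matching the weakening indices in the expression on the right-hand side. This is exactly the point where the levels must be tracked with care: the object actually produced is $\delta_{f^*X}$, obtained from $\delta_X$ by transport along the pullback, rather than a naive weakening of $\delta_X$, and keeping this straight is the delicate part of the argument.
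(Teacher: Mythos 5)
Your proof is correct, and for $i\le m$ it is exactly the paper's argument: the paper also reduces $s_i(q(f,X))$ to the truncation via $ft(q(f,X))=f\circ p_{f^*X}$, applies the identity $s_{fg}=g^*s_f$ (which is all that Lemma \ref{2009.11.10.l3} says), and recognizes $p_{f^*X}^*s_i(f)$ as the one-step weakening $\wt{T}(f^*X,s_i(f))$ from the definition of $uB(CC)$. Where you diverge is the top case $i=m+1$: the paper disposes of it in one line, $s_{q(f,X)}=p_{f^*X}^*(\delta_X)=\wt{T}(f^*X,\delta_X)$, which amounts to applying $s_{fg}=g^*s_f$ to the factorization $q(f,X)=id_X\circ q(f,X)$ to get $s_{q(f,X)}=q(f,X)^*(s_{id_X})=q(f,X)^*(\delta_X)$; you instead derive $s_{q(f,X)}=\delta_{f^*X}$ from the functoriality $q(f\circ p_{f^*X},X)=q(f,X)\circ q(p_{f^*X},f^*X)$ together with uniqueness of sections having a prescribed composite. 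Your route is valid, with the caveat that the uniqueness you invoke (both for $\delta_{f^*X}$ and for $s_{q(f,X)}$) rests on the canonical squares of a C-system being pullbacks -- true, but a fact one could bypass entirely by using the $s_{fg}$ identity as the paper implicitly does. Finally, your closing observation is accurate and worth keeping: the displayed right-hand side $\wt{T}(f^*X,\delta_X)$ does not literally typecheck as a pre-B-system operation (it would force $Y=ft^{m+1-n}(X)$, which fails for a general $f$), and the honest content of the top case is $s_{m+1}(q(f,X))=\delta_{f^*X}=q(f,X)^*(\delta_X)$; the paper's statement and its own proof share this abuse of notation, so your care in distinguishing the transported diagonal from a naive weakening is a genuine improvement rather than a gap.
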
 
\begin{proof}
We have $s_i(q(f,X))=s_{ft^{m+1-i}(q(f,X))}$. For $i\le m$ we have 
$$ft^{m+1-i}(q(f,X))=ft^{m-i}(f)p_{f^*X}$$
Therefore,
$$s_{ft^{m+1-i}(q(f,X))}=s_{ft^{m-i}(f)p_{f^*X}}=p_{f^*X}^*s_{ft^{m-i}(f)}=\wt{T}(f^*X,s_i(f))$$
and for $i=m+1$ we have 
$$s_i(q(f,X))=s_{q(f,X)}=p_{f^*X}^*(\delta_X)=\wt{T}(f^*X,\delta_X).$$
\end{proof}
The lemmas proved above show that a C-system can be reconstructed from the sets $B_n, \wt{B}_{n+1}$ and operations $ft$, $\partial$, $\delta$, $T$, $\wt{T}$, $S$ and $\wt{S}$. This completes our proof of Theorem \ref{2014.10.10.th1}.

\subsection{B-systems}
The next question that we want to address is the description of the image of the functor $CC\mapsto uB(CC)$. To make this question more precise we introduce below the concepts of non-unital and unital B-systems and formulate a problem whose solution would imply that the functor $CC\mapsto uB(CC)$ defines an equivalence between the category of C-systems and the full subcategory of the category of unital pre-B-systems that consists of unital B-systems. 

For $Y\in B_i$ let $B(Y)_j$ denote the subset of $B_{i+j}$ that consists of $X$ such that $ft^j(X)=Y$. In particular $B(Y)_0$ is the one point subset $\{Y\}$. Let also $\wt{B(Y)}_j$ denote the subset of $\wt{B}_{i+j}$ that consists of $r$ such that $ft^j(\partial(r))=Y$.

Then the operations $T$, $\wt{T}$, $S$ and $\wt{S}$ can be seen as follows:
$$T(Y,-):B(ft(Y))_*\sr B(Y)_*$$
$$\wt{T}(Y,-):\wt{B}(ft(Y))_*\sr \wt{B}(Y)_*$$
$$S(s,-):B(\partial(s))_*\sr B(ft(\partial(s)))_*$$
$$\wt{S}(s,-):\wt{B}(\partial(s))_*\sr \wt{B}(ft(\partial(s)))_*$$
\begin{definition}
\llabel{2014.10.16.def2}
\llabel{was.2014.06.18.eq2.to.eq11}
Let $B$ be a non-unital B0-system. Define the following conditions on $B$:
\begin{enumerate}
\item The TT-condition. For all  $GT\in B_{i+1}$, $GDT'\in B(ft(GT))_{j+1}$ one has
\begin{enumerate}
\item for all $R\in B(ft(GDT'))_*$
$$T(T(GT,GDT'),T(GT,R))=T(GT,T(GDT',R))$$
\item for all $r\in \wt{B}(ft(GDT'))_*$
$$\wt{T}(T(GT,GDT'),\wt{T}(GT,r))=\wt{T}(GT,\wt{T}(GDT',r))$$
\end{enumerate}
\item The SS-condition. For all $s\in \wt{B}_{i+1}$, $s'\in \wt{B(\partial(s))}_{j+1}$ one has
\begin{enumerate}
\item for all $R\in B(\partial(s'))_*$
$$S(\wt{S}(s,s'),S(s,R))=S(s,S(s',R))$$
\item for all $r\in \wt{B}(\partial(s'))_*$
$$\wt{S}(\wt{S}(s,s'),\wt{S}(s,r))=\wt{S}(s,\wt{S}(s',r))$$
\end{enumerate}
\item The TS-condition. For any $GT\in B_{i+1}$ and $s'\in \wt{B(ft(GT))}_{j+1}$ one has
\begin{enumerate}
\item for all $R\in B(\partial(s'))_*$
$$S(\wt{T}(GT,s'),T(GT,R))=T(GT,S(s',R))$$
\item for all $r\in \wt{B}(\partial(s'))_*$
$$\wt{S}(\wt{T}(GT,s'),\wt{T}(GT,r))=\wt{T}(GT,\wt{S}(s',r))$$
\end{enumerate}
\item The ST-condition. For any $s\in \wt{B}_{i+1}$ and $GTDT'\in \wt{B(\partial(s))}_{j+1}$ one has
\begin{enumerate}
\item for all $R\in B(ft(GTDT'))_*$
$$T(S(s,GTDT'),S(s,R))=S(s,T(GTDT',R))$$
\item for all $r\in \wt{B}(ft(GTDT'))_*$
$$\wt{T}(S(s,GTDT'),\wt{S}(s,r))=\wt{S}(s,\wt{T}(GTDT',r))$$
\end{enumerate}
\item The STid-condition. For any $s\in \wt{B}_{i+1}$ one has
\begin{enumerate}
\item for all $R\in B(ft(\partial(s)))_*$
$$S(s,T(\partial(s),R))=R$$
\item for all $r\in \wt{B}(ft(\partial(s)))_*$
$$\wt{S}(s,\wt{T}(\partial(s),r))=r$$
\end{enumerate}
\end{enumerate}
\end{definition}
\begin{definition}
\llabel{2014.10.20.def3}
Let $B$ be a unital B0-system. Define the following conditions on $B$:
\begin{enumerate}
\item The $\delta$T-condition. For any $GT\in B_{i+1}$ and $GDT'\in B(ft(GT))_{j+1}$ one has
$$\wt{T}(GT,\delta(GDT'))=\delta(T(GT,GDT'))$$
\item The $\delta$S-condition. For any $s\in \wt{B}_{i+1}$ and $GTDT'\in B(\partial(s))_{j+1}$ one has
$$\wt{S}(s,\delta(GTDT'))=\delta(S(s,GTDT'))$$
\item The $\delta$Sid-condition. For any $s\in \wt{B}_{i+1}$ one has
$$\wt{S}(s,\delta(\partial(s)))=s$$
\item The S$\delta$T-condition. For any $GT\in B_{i+1}$ one has
\begin{enumerate}
\item for $R\in B(GT)_*$ one has:
$$S(\delta(GT),T(GT,R))=R$$
\item for $r\in \wt{B(GT)}_*$ one has
$$\wt{S}(\delta(GT),\wt{T}(GT,r))=r$$
\end{enumerate}
\end{enumerate}
\end{definition}
\begin{remark}
\llabel{2014.06.14.rem2}
The conditions defined above can be shown as follows:
\begin{enumerate}
\item The TT-condition:
$$
\frac{\Gamma,T\rtr\spc \Gamma,\Delta,T'\rtr\spc \Gamma,\Delta\vdash \JJ}
{\frac{\Gamma,T\rtr\spc \Gamma,\Delta,T'\vdash\JJ}{\Gamma, T,\Delta, T'\vdash \JJ}\spc
\frac{\Gamma,T,\Delta,T'\rtr\spc \Gamma,T,\Delta\vdash \JJ}{\Gamma, T,\Delta, T'\vdash \JJ}}
$$
\item The SS-condition:
$$
\frac{\Gamma\vdash s:T\spc\Gamma,T,\Delta\vdash s':T'\spc \Gamma,T,\Delta,T'\vdash \JJ}
{\frac{\Gamma\vdash s:T\spc \Gamma,T,\Delta\vdash \JJ[s]}{\Gamma,\Delta[s]\vdash \JJ[s'][s]}
\spc 
\frac{\Gamma,\Delta[s]\vdash s'[s]:T'[s]\spc \Gamma,\Delta[s],T'[s]\vdash \JJ[s]}{\Gamma,\Delta[s]\vdash \JJ[s][s']}}
$$
\item The TS-condition:
$$
\frac{\Gamma,T\rtr \spc\Gamma,\Delta\vdash s':T'\spc \Gamma,\Delta,T'\vdash \JJ}
{\frac{\Gamma, T\rtr \spc \Gamma,\Delta\vdash \JJ[s']}{\Gamma,T,\Delta\vdash \JJ[s']}
\spc 
\frac{\Gamma,T,\Delta \vdash s':T'\spc \Gamma,T,\Delta,T'\vdash \JJ}{\Gamma,T,\Delta\vdash \JJ[s']}}
$$
\item The ST-condition:
$$
\frac{\Gamma\vdash s:T\spc\Gamma,T,\Delta,T'\rtr \spc \Gamma,T,\Delta\vdash \JJ}
{\frac{\Gamma\vdash s:T\spc \Gamma,T,\Delta,T'\vdash \JJ[s]}{\Gamma,\Delta[s], T'[s]\vdash \JJ[s]}
\spc 
\frac{\Gamma,\Delta[s],T'[s]\rtr\spc \Gamma,\Delta[s]\vdash \JJ[s]}{\Gamma,\Delta[s],T'[s]\vdash \JJ[s]}}
$$
\item The STid-condition:
$$
\frac{\Gamma\vdash s:T\spc\Gamma, T\rtr\spc\Gamma\vdash\JJ}{\frac{\Gamma\vdash s:T\spc \Gamma,T\vdash \JJ}{\Gamma\vdash\JJ[s]}}
$$
\item The $\delta$T-condition:
$$
\frac{\Gamma,T\rtr \spc \Gamma,\Delta,x:T'\rtr}{
\frac{\Gamma,T\rtr\spc \Gamma,\Delta,x:T'\vdash x:T'}{\Gamma,T,\Delta,x:T'\vdash x:T'}
\frac{\Gamma,T,\Delta,x:T'\rtr}{\Gamma,T,\Delta,x:T'\vdash x:T'}}
$$
\item The $\delta$S-condition:
$$
\frac{\Gamma\vdash s:T \spc \Gamma,T,\Delta,x:T'\rtr}{
\frac{\Gamma\vdash s:T\spc \Gamma,T, \Delta,x:T'\vdash x:T'}{\Gamma,\Delta[s],x:T'[s]\vdash x:T'[s]}
\frac{\Gamma,\Delta[s],x:T[s]'\rtr}{\Gamma,\Delta[s],x:T'[s]\vdash x:T'[s]}}
$$
\item The $\delta$Sid-condition:
$$
\frac{\Gamma\vdash s:T \spc \Gamma, x:T\rtr}{
\frac{\Gamma\vdash s:T\spc \Gamma,x:T\vdash x:T}{\Gamma\vdash s:T}}$$
\item The S$\delta$T-condition:
$$
\frac{\Gamma, y:Y,\Delta\vdash{\cal J}}{\frac{\Gamma,y_1:Y,y:Y,\Delta\vdash{\cal J}\spc \Gamma,y_1:Y\vdash y_1:Y}{\Gamma,y_1:Y,\Delta[y_1/y]\vdash{\cal J}[y_1/y]}}$$
\end{enumerate}
\end{remark}
\begin{lemma}
\llabel{2014.10.20.l1}
\llabel{2014.10.16.l1}
Let B be a unital B0-system and let $\delta_1$, $\delta_2$ be two families of operations as in Definition \ref{2014.10.20.def1}. Suppose that both $\delta_1$ and $\delta_2$ satisfy the $\delta T$, $\delta S id$ and $S\delta T$ conditions. Then $\delta_1=\delta_2$.
\end{lemma}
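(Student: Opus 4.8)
The plan is to show that each value $\delta(X)$, for $X\in B_{n+1}$, is uniquely forced by the three conditions, by starting from $\delta_2(X)$ and rewriting it into $\delta_1(X)$ using one application of each condition. First I would record the auxiliary identity $\partial(\delta_k(X))=T(X,X)$ for $k=1,2$. This is forced by well-typedness of the $S\delta T$-condition (a): already its degree-zero instance $R=X\in B(X)_0$ requires $S(\delta_k(X),T(X,X))$ to be defined, which holds precisely when $\partial(\delta_k(X))=T(X,X)$ (the condition then asserting $S(\delta_k(X),T(X,X))=X$). This is the analogue for $\delta_k$ of the B0-identity (\ref{2009.12.27.eq1}), and without it the composites below are undefined.

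Now fix $X\in B_{n+1}$. I would apply the $\delta Sid$-condition for $\delta_1$ to the element $s=\delta_2(X)\in\wt{B}_{n+2}$; since $\partial(\delta_2(X))=T(X,X)$, this reads
$$\wt{S}(\delta_2(X),\delta_1(T(X,X)))=\delta_2(X).$$
Next I would apply the $\delta T$-condition for $\delta_1$ with $GT=GDT'=X$ (the degenerate case $j=0$, legitimate because $X\in B(ft(X))_1$), which gives $\delta_1(T(X,X))=\wt{T}(X,\delta_1(X))$, so that
$$\wt{S}(\delta_2(X),\wt{T}(X,\delta_1(X)))=\delta_2(X).$$
Finally I would apply the $S\delta T$-condition (b) for $\delta_2$ with $GT=X$ and $r=\delta_1(X)$, which is admissible since $\partial(\delta_1(X))=T(X,X)$ forces $\delta_1(X)\in\wt{B(X)}_1$; this reads
$$\wt{S}(\delta_2(X),\wt{T}(X,\delta_1(X)))=\delta_1(X).$$
Comparing the two computations of the common left-hand side yields $\delta_1(X)=\delta_2(X)$, as required.

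The computation itself is a three-line substitution with no induction, so the only place that needs care — and the step I would check most scrupulously — is the sort bookkeeping: confirming that $\partial(\delta_2(X))$ really equals $T(X,X)$ (rather than merely satisfying $ft\,\partial(\delta_2(X))=X$), so that the $\delta T$-rewrite applies verbatim, and that the domain hypotheses $GDT'\in B(ft(GT))_{j+1}$ of the $\delta T$-condition (at $j=0$) and $r\in\wt{B(GT)}_*$ of the $S\delta T$-condition are genuinely met. Once these index matches are verified there is no further obstacle; each of the three conditions is used exactly once, $\delta Sid$ and $\delta T$ for $\delta_1$ and $S\delta T$ for $\delta_2$, which also explains why all three are needed for the uniqueness statement.
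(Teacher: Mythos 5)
Your three-step computation is, up to reading direction, exactly the paper's own proof: the paper writes
$$\delta_1(GT)=\wt{S}(\delta_2(GT),\wt{T}(GT,\delta_1(GT)))=\wt{S}(\delta_2(GT),\delta_1(T(GT,GT)))=\delta_2(GT),$$
using the $S\delta T$-condition for $\delta_2$, the $\delta T$-condition for $\delta_1$, and the $\delta Sid$-condition for $\delta_1$ --- the same three instances you use, in the same roles, with no induction. So the core of your proposal is correct and is essentially the paper's argument.

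The one step that is genuinely wrong is your justification of the auxiliary identity $\partial(\delta_k(X))=T(X,X)$ via the ``degree-zero instance'' of the $S\delta T$-condition (a). For $X\in B_{n+1}$ one has $\delta_k(X)\in\wt{B}_{n+2}$ and $T(X,X)\in B_{n+2}$, but the operation $S$ with first argument in $\wt{B}_{n+2}$ is only defined on second arguments in $B_{m+2}$ with $m\ge n+1$, i.e.\ of level at least $n+3$. Hence $S(\delta_k(X),T(X,X))$ lies outside the domain of $S$ \emph{no matter what} $\partial(\delta_k(X))$ is; the $R=X$ instance is simply excluded from the quantification and cannot ``require'' anything. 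Fortunately this does not damage the proof, because the identity you need is available by hypothesis: it is equation (\ref{2009.12.27.eq1}) in the definition of a unital B0-system (Definition \ref{2014.10.20.def2}), and since the conditions $\delta T$, $\delta Sid$, $S\delta T$ of Definition \ref{2014.10.20.def3} are only formulated for unital B0-systems, the assumption that $\delta_1$ and $\delta_2$ satisfy them presupposes that each $(B,\delta_k)$ is a unital B0-system. If you do insist on extracting the identity from well-formedness, you must use an instance of positive degree, e.g.\ $R=T(X,X)\in B(X)_1$ (nonempty by the $m=n$ case of (\ref{oldeq1})): definedness of $S(\delta_k(X),T(X,T(X,X)))$ is then equivalent to $\partial(\delta_k(X))=ft(T(X,T(X,X)))=T(X,X)$, though this reading takes the condition to assert definedness of its terms rather than to be a conditional equation.
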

\begin{proof}
We have:
$$\delta_1(GT)=\wt{S}(\delta_2(GT),\wt{T}(GT,\delta_1(GT)))=\wt{S}(\delta_2(GT),\delta_1(T(GT,GT)))=\delta_2(GT)$$
where the first equality is the $S\delta T$-condition for $\delta_2$, the second equality is the $\delta T$-condition for $\delta_1$ and the third equality is the $\delta S id$-condition for $\delta_1$. 
\end{proof}
\begin{definition}
\llabel{2014.10.10.def2a}
\llabel{2014.10.20.def4}
A non-unital B-system is a non-unital B0-system that satisfy the conditions $TT$, $SS$, $TS$, $ST$ and $STid$ of Definition \ref{2014.10.16.def2}. 
\end{definition}
\begin{definition}
\llabel{2014.10.10.def2b}
\llabel{2014.10.20.def5}
A unital B-system is a unital B0-system that satisfy the conditions $TT$, $SS$, $TS$, $ST$, $STid$ of Definition \ref{2014.10.16.def2} and the conditions $\delta T$, $\delta S$, $\delta S id$ and S$\delta$T of Definition \ref{2014.10.20.def3}. 

Equivalently, a unital B-system is non-unital B-system such that there exists a family of operations $\delta$ satisfying the conditions $\delta T$, $\delta S$, $\delta S id$ and S$\delta$T of Definition \ref{2014.10.20.def3}. 
\end{definition}
\begin{example}
\rm\llabel{2014.10.20.ex1}
While being unital is a property of non-unital B-systems not 
any homomorphism of non-unital B-systems preserves units. Here is a sketch of an example 
of a homomorphism that does not preserve units.

Consider the following pairs of a monad and a left module over it. In both cases $pt$ is the constant functor corresponding to the one point set $\{T\}$ that has a unique left module structure over any monad.
\begin{enumerate}
\item $(R_1,pt)$  where $R_1$ is the monad corresponding to one unary operation $s_1(x)$ and the relation 
$$s_1(s_1(x))=s_1(x)$$
\item $(R_2,pt)$ where $R_2$ is the monad corresponding to two unary operations $s_1(x)$ and $s_2(x)$ and relations:
$$s_1(s_1(x))=s_1(x)\spc s_1(s_2(x))=s_1(x)\spc s_2(s_1(x))=s_1(x)\spc s_2(s_2(x))=s_2(x)$$
\end{enumerate}
Consider the unital B-systems $uB(R_1,pt)$ and $uB(R_2,pt)$. In $uB(R_1,pt)$ consider the non-unital sub-B-system $nuB_1$ generated by $(T\vdash s_1(1):T)$. In $uB(R_2,pt)$ consider the non-unital sub-B-system $nuB_2$ generated by $(T\vdash s_1(1):T)$ and $(T\vdash s_2(1):T)$. 

Observe that both $nuB_1$ and $nuB_2$ are in fact unital with the unit in the first one given by $(T,\dots,T\vdash s_1(n):T)$ and unit in the second one is given by $(T,\dots,T\vdash s_2(n):T)$ 
where $n$ is the number of $T$'s before the turnstile $\vdash$ symbol. 

We also have an obvious (unital) homomorphism from $uB(R_1,pt)$ to $uB(R_2,pt)$ that defines a homomorphism $nuB_1\sr nuB_2$ and that latter homomorphism is not unital. 
\end{example}

\begin{remark}
\rm
For a unital B-systems operations $S$ and $T$ can be expressed as follows.
\begin{eq}\llabel{2014.10.14.eq1}
T(Y,X)=\left\{
\begin{array}{ll}
Y&\mbox{\rm if $l(X)=l(Y)-1$}\\
ft(\partial(\wt{T}(Y,\delta(X))))&\mbox{\rm if $l(X)\ge l(Y)$}
\end{array}
\right.
\end{eq}
\begin{eq}\llabel{2014.10.14.eq2}
S(s,X)=\left\{
\begin{array}{ll}
ft(\partial(s))&\mbox{\rm if $l(X)=l(\partial(s))$}\\
ft(\partial(\wt{S}(s,\delta(X))))&\mbox{\rm if $l(X)> l(\partial(s))$}
\end{array}
\right.
\end{eq}
\end{remark}

I would like to end this section with the formulation of the following problem. I am reasonably sure that it has a straightforward solution.
\begin{problem}
\llabel{2014.10.10.prob2}
To show that a unital B0-system is isomorphic to a unital B0-system of the form $uB(CC)$ if and only if it is a unital B-system.
\end{problem}

\subsection{B-systems in Coq}

While our main interest is in pre-B-systems and B-systems in sets we would like to be able to formalize their definitions in Coq  without assuming that $B_n$ and $\wt{B}_{n+1}$ are of h-level 2. 

This suggests the following reformulation of our definitions. In what follows we give a presentation of non-unital B-systems in ``functional terms''. The presentation of the axioms related to the $\delta$-operations is more complex as can be see already in the case of the $\delta T$-axiom and we leave it for the future. 

Let us define a tower as a sequence of functions $T := (\dots\sr T_{i+1}\stackrel{p_i}{\sr}T_i\sr\dots\sr T_0)$. 

For a tower $T$ and $i,j\ge 0$ define $ft_{i}^{j}:T_{i+j}\sr T_i$ as the composition of the functions $p_k$ for $k=i,\dots,i+j-1$. When no ambiguity can arise we will write $ft^j$ instead of $ft_{i}^{j}$ and we will write $ft$ instead of $ft^1$. 

For a tower $T$, $i\ge 0$ and $G\in T_i$ define a new tower $T(G)$ setting:
$$T(G)_j=\{GD\in T_{i+j}| ft_{i}^{j}(x)=G\}$$
and defining the functions $T(G)_{j+1}\sr T(G)_j$ in the obvious way. More categorically this can expressed by saying that $T(G)_j$ is defined by the standard (homotopy) pull-back square 
$$
\begin{CD}
T(G)_j @>>> T_{i+j}\\
@VVV @VVft_{i}^{j}V\\
pt @>G>> T_i
\end{CD}
$$

For $G\in T_{i+j}$ we let $\phi_j(G)\in T(ft^{i+j}(G))_j$ denote the  obvious element. 

For towers $T$ and $T'$ define a function or morphism of towers $F:T\sr T'$ as a sequence of morphisms $F_i:T_i\sr T'_i$ which commute in the obvious sense with the functions $p_i$ and $p'_i$.

The identity function of towers $id_T$ and the composition of functions of towers are defined in the obvious way. 

For $T$, $i,j,k\ge 0$, $G\in T_i$ and  $GD\in T_j(G)$ we have the digrams:
$$
\begin{CD}
T(G)(GD)_k @>>> T(G)_{j+k} @>>> T_{i+(j+k)}\\
@VVV @VVft_{T(G),j}^{k}V @VVV\\
pt @>GD>> T(G)_j @>u_{G,j}>> T_{i+j}\\
@. @VVV @VVft_{T,i}^{j}V\\
{} @. pt @>G>> T_i \\
\end{CD}
\,\,\,\,\,\,\,\,\,\,\,\,\,\,\,\,
\begin{CD}
T(u_{G,i}(GD))_k @>>> T_{(i+j)+k}\\
@VVV @VVV\\
pt @>>u_{G,j}(GD)> T_{i+j}
\end{CD}
$$

which shows that we have natural equivalences (isomorphisms) 
\begin{eq}
\llabel{2014.06.12}
T(G)(GD)_k\cong T(u_{G,j}(GD))_k
\end{eq}
The equivalences (\ref{2014.06.12}) commute with the functions $p(G)(GD)_i$ and $p(u_{G,j}(GD))$ in the obvious sense and define an equivalence of towers
\begin{eq}
\llabel{2014.06.14.eq2}
T(G)(GD) \cong T(u_{G,j}(GD))
\end{eq}
\begin{remark}\rm
In the case when standard pull-backs are pull-backs in a category, the functions $u_{G,j}$ from $T_j(G)$ to $T_{i+j}$ are pull-backs of (split) monomorphisms and therefore are monomorphisms. In this case $T_k(G)(GD)$ is a sub-object of $T_{i+(j+k)}$ and  $T_k(u_{G,j}(GD))$ is a sub-object of $T_{(i+j)+k}$ which are canonically equal. Then we can say that
\begin{eq}
\llabel{2014.06.14.eq1}
T(G)(GD)_k = T(u_{G,j}(GD))_k
\end{eq}
where the equality is the equality of sub-objects of $T_{(i+j)+k}$. 

More generally, if $T_i$ are objects of h-level 2, the functions $u_{G,j}$ are of h-level 1 (monic inclusions) and we again can say that the equality (\ref{2014.06.14.eq1}) holds as the unique equality of monic sub-objects of $T_{(i+j)+k}$.
\end{remark} 

For a function $F:T\sr T'$ and $G\in T_i$ we obtain a function $F(G):T(G)\sr T'(G)$ using functoriality of standard pull-backs. 

Define a B-system carrier or a B-carrier as a pair $\B = (B, \wt{B})$ where $B$ is a tower and $\wt{B}$ is a family $\wt{B}_{i+1}$, $i\ge 0$ together with functions $\partial_i:\wt{B}_{i+1}\sr B_i$.
The B-system carriers in sets are the same as the ``type-and-term structures'' of \cite{Garner}.

We will denote the standard fiber of $\partial_i$ over $GT\in B_{i+1}$ by $\wt{B}_{GT}$. 

For a B-carrier  $\B$, $i\ge 0$ and $G\in B_i$, define a B-carrier $\B(G)$ as the pair $(B(G), \wt{B(G)})$ where 
$$\wt{B(G)}_{j+1}=\{ s \in \wt{B}_{i+j+1} | \partial(s)\in B(G)_{j+1}\}$$
or, categorically, $\wt{B(G)}_{j+1}$ is defined by the standard pull-back square
$$
\begin{CD}
\wt{B(G)}_{j+1} @>\wt{u}_{G,j+1}>> \wt{B}_{i+(j+1)}\\
@V\partial(G)VV @VV\partial V\\
B(G)_{j+1} @>u_{G,j+1}>> B_{i+(j+1)}
\end{CD}
$$

For a B-carrier $\B$, $i,j\ge 0$, $G\in B_i$ and $GD\in B_{i+j}$ the equivalence (\ref{2014.06.14.eq2}) clearly extends to an equivalence
\begin{eq}
\llabel{2014.06.14.eq3}
\B(G)(GD)\cong \B(u_G(GD))
\end{eq}

For B-carriers $\B$ and $\B'$ define a function of B-carriers $\FF: \B\sr \B'$ as a pair $\FF = ( F , \wt{F} )$ where $F:B\sr B'$ is a function of towers and for every $i\ge 0$, $\wt{F}_{i+1}$ is a function $\wt{B}_{i+1}\sr \wt{B}'_{i+1}$ which commutes in the obvious sense with the functions $\partial'$, $F_{i+1}$ and $\partial$. 

The identity function of B-carriers  $id_{\B}$ and the composition of functions of B-carriers are defined in the obvious way.

For a function of B-carriers $\FF:\B\sr \B'$ and $G\in B_i$ we obtain a function of B-carriers $\FF(G):\B(G)\sr \B'(F(G))$ using functoriality of standard pull-backs. 
\begin{definition}
\llabel{Bdata}
Non-unital B-system data is given by the following:
\begin{enumerate}
\item a B-system carrier $\B$ ,
\item an isomorphism $pt\sr B_0$,
\item for every $m\ge 0$, $Y\in B_{n+1}$ a B-carrier function $\TT_{Y}:\B(p_n(Y))\sr \B(Y)$,
\item for every $m\ge 0$, $s\in \wt{B}_{n+1}$, a B-carrier function $\SS_s:\B(\partial(s))\sr \B(p_n(\partial(s)))$,
%\item for every $i\ge 0$, $Y\in B_{i+1}$ an element $\delta_{Y}\in \wt{B}(T_{Y}(\phi_1(Y)))_1$.
\end{enumerate}
\end{definition}
\begin{problem}
\llabel{2014.10.10.prob1}
Construct an equivalence between the type of  non-unital B0-systems the type of non-unital B-system data such that the types $B_*$ and $\wt{B}$ are sets.
\end{problem}
\begin{construction}\rm
\llabel{2014.10.10.constr1}
A non-unital B-system carrier is the same as two families of sets $B_n$, $\wt{B}_{n+1}$ together with maps $p_n:B_{n+1}\sr B_n$ and $\partial:\wt{B_{n+1}}\sr B_{n+1}$. 

An isomorphism $pt\sr B_0$ is the same as an element $pt\in B_0$ such that for all $X\in B_0$, $X=pt$.

For a given $Y\in B_{n+1}$ a B-carrier function $\TT_{Y}:\B(ft(Y))\sr \B(Y)$ is the same as: 
\begin{enumerate}
\item for all $i\ge 0$, $X\in B_{n+i}$ such that $ft^i(X)=ft(Y)$, an element $T(Y,X)\in B_{n+i+1}$ such that $ft^i(T(Y,X))=Y$,
\item for all $i\ge 0$, $r\in \wt{B}_{n+i+1}$ such that $ft^{i+1}(\partial(r))=ft(Y)$, an element $\wt{T}(Y,r)$ such that $ft^{i+1}(\partial(r))=ft(Y)$.
\end{enumerate}
For $i=0$, the operation $T$ is uniquely determined by the condition $ft^i(T(Y,X))=Y$ which leaves us with the operations $T$ and $\wt{T}$ as in Definition \ref{2014.10.10.def1} satisfying the conditions of Lemma \ref{l2014.10.10.l1}.

The same reasoning applies to $S$, $\wt{S}$.
\end{construction}
From this point on everything is assumed to be non-unital. Let $\BD = (\B, \TT, \SS,\delta)$ be B-data and $G\in B_i$. Define B-data $\BD(G)$ over $G$ as follows. The B-carrier of $\BD(G)$ is $\B(G)$. 

For $GDT\in B(G)_{i+1}$ we need to define a B-carrier function 
$$\TT(G)_{GDT}:\B(G)(p_i(GDT))\sr \B(G)(GDT)$$
We define it through the condition of commutativity of the pentagon:
\begin{eq}
\llabel{2014.06.14.eq4}
\begin{CD}
\B(G)(p_i(GDT))@>\TT(G)_{GDT}>> \B(G)(GDT)\\
@V\cong VV @VV\cong V\\
\B(u_G(p_i(GDT)))\cong \B(p_i(u_G(GDT))) @>\TT_{\_}>> \B(u_G(GDT))
\end{CD}
\end{eq}
where the vertical equivalences are from (\ref{2014.06.14.eq3}).

Similarly for $s\in \wt{B(G)}_{j+1}$ we define a B-carrier function 
$$\SS(G)_s:\B(G)(\partial(s))\sr \B(G)(p_j(\partial(s)))$$
by the diagram:
\begin{eq}
\llabel{2014.06.14.eq5}
\begin{CD}
\B(G)(\partial(s)) @>\SS(G)_s>> \B(G)(p_j(\partial(s)))\\
@VVV @VVV\\
\B(u_G(\partial(s))) \cong \B(\partial(\wt{u}_G(s))) @>\SS(\wt{u}_G(s))>> \B(p_j(\partial(\wt{u}_G(s)))) \cong \B(u_G(p_j(\partial(s))))
\end{CD}
\end{eq}

\comment{Define for $i\ge 0$ and $GDT\in B(G)_{i+1}$ an element $\delta(G)_{GDT}$ in $\wt{B(G)}_{T(G)_{GDT}(\phi_1(GDT))}$.  

We have 
$$\wt{B(G)}_{T(G)_{GDT}(\phi(GDT))}\cong \wt{B}_{u_G(T(G)_{GDT}(\phi(GDT)))}\cong \wt{B}_{T_{u_G(GDT)}(\phi(u_G(GDT)))}$$
and we define $\delta(G)_{GDT}$ as the image of $\delta_{u_G(GDT)}$ under the inverse to this equivalence. }

We can now give formulations for the conditions TT, SS, TS, ST and STid.
\begin{definition}\llabel{2014.10.16.def3.fromold}
Let us define the following conditions on a B-system data $(\B, \TT, \SS, \delta)$:
\begin{enumerate}
\item  The TT-condition.  For any $GT\in B_{i+1}$, $GDT'\in B_{j+1}(p_i(GT))$ the pentagon of B-carrier functions 
\begin{eq}
\label{TTax}
\begin{CD}
\B(p_i(GT))(p_j(GDT')) @>\TT(p_i(GT))_{GDT'}>> \B(p_i(GT))(GDT')\\
 @V \TT_{GT}(p_j(GDT')) VV  @.\\   
 \B(GT)(T_{GT}(p_j(GDT'))) @. @V V\TT_{GT}(GDT') V\\
 @V\cong VV @.\\
 \B(GT)(p_j(T_{GT}(GDT'))) @>  \TT_{T_{GT}(GDT')}(GT) >> \B(GT)(\TT_{GT}(GDT'))
 \end{CD}
\end{eq}
commutes. 
\item The SS-condition. For any $s\in \wt{B}_{i+1}$, $s'\in \wt{B}_{j+1}(\partial(s))$ the diagram of B-carrier functions 
\begin{eq}
\label{2014.06.16.eq1}
\label{SSax}
\begin{CD}
\B(\partial(s))(\partial(s')) @>\SS(\partial(s))_{s'}>> \B(\partial(s))(p_j(\partial(s')))\\
@V\SS_s(\partial(s'))VV @VV\SS_s(p_j(\partial(s'))) V\\
\B(p_i(\partial(s)))(S_s(\partial(s')))  @. \B(p_i(\partial(s))(S_s(p_j(\partial(s'))))\\
@V\cong VV @VV \cong V\\
\B(p_i(\partial(s)))(\partial(\wt{S}_s(s'))) @>\SS(p_i(\partial(s)))_{\wt{S}_{s}(s')}>> \B(p_i(\partial(s)))(p_j(\partial(\wt{S}_s(s'))))
\end{CD}
\end{eq}
commutes. 
\item The TS-condition. For any $GT\in B_{i+1}$, $s'\in \wt{B}_{j+1}(p_i(GT))$ the diagram of B-carrier functions 
\begin{eq}
\label{2014.06.16.eq3}
\label{TSax}
\begin{CD}
\B(p_i(GT))(\partial(s')) @>\SS(p_i(GT))_{s'}>> \B(p_i(GT))(p_j(\partial(s')))\\
@V\TT_{GT}(\partial(s')) VV @VV\TT_{GT}(p_j(\partial(s'))) V\\
\B(GT)(T_{GT}(\partial(s'))) @. \B(GT)(T_{GT}(p_j(\partial(s'))))\\
@V\cong VV @VV\cong V\\
\B(GT)(\partial(\wt{T}_{TG}(s'))) @>\SS(GT)_{\wt{T}_{GT}(s')}>> \B(GT)(p_j(\partial(\wt{T}_{GT}(s'))))
\end{CD}
\end{eq}
\item The ST-condition. For any $s\in \wt{B}_{i+1}$, $GTDT'\in B_{j+1}(\partial(s))$ the diagram of B-carrier functions 
\begin{eq}
\label{2014.06.16.eq2}
\label{STax}
\begin{CD}
\B(\partial(s))(p_j(GTDT')) @>\TT(\partial(s))_{GTDT'}>> \B(\partial(s))(GTDT') \\
@V\SS_s(p_j(GTDT')) VV @.\\
\B(p_i(\partial(s)))(S_s(p_j(GTDT'))) @. @VV\SS_s(GTDT') V\\
@V\cong VV @.\\
\B(p_i(\partial(s)))(p_j(S_s(GTDT'))) @>\TT(p_i(\partial(s)))_{S_s(GTDT')}>> \B(p_i(\partial(s)))(S_s(GTDT'))
\end{CD}
\end{eq}
\item The STid-condition. For any $s\in \wt{B}_{i+1}$ 
one has 
$$( \B(p_i(\partial(s)))\stackrel{T_{\partial(s)}}{\sr}\B(\partial(s))\stackrel{S_s}{\sr}\B(p_i(\partial(s))) ) = id_{\B(p_i(\partial(s))}$$
\end{enumerate}
\end{definition}
Formulation of the remaining four conditions that involve $\delta$ is more difficult since their formulation using this approach leads to conditions that  depend on the conditions from the first group. We leave their study for the future. 

\comment{
\item B-system data $(\B, \TT, \SS, \delta)$ which satisfies the TT-condition is said to satisfy $\delta$T-condition if for any $GT\in B_{i+1}$, $GDT'\in B(p_i(GT))_{j+1}$ one has:
$$(\wt{T}_{GT}(\delta_{GDT'}:\wt{B(p_i(GT))}(T_{GDT'}(\phi(GDT'))):\wt{B(GT)}(T_{GT}(T_{GDT'}(\phi(GDT')))) = $$
$$(\delta_{T_{GT}(GDT')}:\wt{B(GT)}(T_{T_{GT}(GDT')}(\phi(T_{GT}(GDT'))))$$
relative to the equality
$$T_{GT}(T_{GDT'}(\phi(GDT')))=T_{T_{GT}(GDT')}(\phi(T_{GT}(GDT')))$$
obtained by applying (\ref{2014.06.18.eq2}) to $R=\phi(GDT')$.
%???
%

\item B-system data $(\B, \TT, \SS, \delta)$ which satisfies the ???-condition is said to satisfy $\delta$S-condition if for any $s\in \wt{B}_{i+1}$, $GTDT'\in B(\partial(s))_{j+1}$ one has:
$$(\wt{S}_{s}(\delta_{GTDT'}:\wt{B(\partial(s))}(T_{GTDT'}(\phi(GTDT'))):\wt{B(GT)}(T_{GT}(T_{GDT'}(\phi(GDT')))) = $$
$$(\delta_{T_{GT}(GDT')}:\wt{B(GT)}(T_{T_{GT}(GDT')}(\phi(T_{GT}(GDT'))))$$
relative to the equality
$$T_{GT}(T_{GDT'}(\phi(GDT')))=T_{T_{GT}(GDT')}(\phi(T_{GT}(GDT')))$$
obtained by applying (\ref{2014.06.18.eq2}) to $R=\phi(GDT')$.
%???
%
??? \delta S condition.

\item B-system data $(\B, \TT, \SS, \delta)$ which satisfies the STid-condition is said to satisfy $\delta$Sid-condition if for any $s\in \wt{B}_{i+1}$ one has:
$$(\wt{S}_s(\delta_{\partial(s)}:\wt{B(\partial(s))}_{T_{\partial(s)}(\phi(\partial(s)))}):\wt{B(p(\partial(s)))}_{S_s(T_{\partial(s)}(\phi(\partial(s))))} =  (\wt{\phi}(s) : \wt{B(p(\partial(s)))}_{\phi(\partial(s))})$$
relative to the equality
$$S_s(T_{\partial(s)}(\phi(\partial(s))))=\phi(\partial(s))$$
obtained by applying (\ref{2014.06.18.eq10}) to $\phi(\partial(s))$. 
}

\comment{

We will also use the following notations:
\begin{enumerate}
\item $B(X)=\{Y\in Ob(CC)\,|\,ft(Y)=X\,and\,l(Y)>0\}$,
\item $\wt{B}(X)=\partial^{-1}(X)$ (note that $\wt{B}(pt)=\emptyset$).
\end{enumerate} 

Let $CC$ be a C-system. For $f:X\sr Y$ and $i\ge 1$ such that $l(Y)\ge i$ define $s_{f,i}:X\sr (ft^i(f))^*(Y)$ inductively as follows:
\begin{enumerate}
\item $s_{(f,1)}=s_f$,
\item $s_{(f,i+1)}=s_f\circ q(s_{ft(f),i},(ft^{i+1}(f))^*(Y,i+1))$
\end{enumerate}
as seen on the following diagram:
$$
\begin{CD}
X @>s_f>> (ft(f))^*(Y) @>q(s_{(ft(f),i)}, (ft^{i+1}(f))^*(Y,i+1))>> ((ft^{i+1}(f))^*(Y,i+1) @>q(ft^{i+1}(f),Y,i+1)>> Y\\
@. @VVV @VVV @VVV\\
@. X @>s_{(ft(f),i)}>> (ft^{i+1}(f))^*(ft(Y),i) @>q(ft^{i+1}(f),ft(Y),i)>> ft(Y)\\
@. @. @VVV @VVV\\
@. @. X @> ft^{i+1}(f)>> ft^{i+1}(Y)
\end{CD}
$$
Note that we also have:
$$s_{f,i+1}=s_{f,i}\circ q(s_{ft^i(f)},(ft^{i+1}(f))^*(Y,i+1),i)$$
\begin{lemma}
\llabel{2014.07.06.l3}
For $i\ge 1$ and $f:X\sr Y$ such that $l(Y)\ge i$ one has
$$s_{f,i}\circ q(ft^i(f),Y,i)=f$$ 
\end{lemma}
\begin{proof}
By induction using condition (3) of Definition \ref{2014.07.06.def1} and condition (7) of Definition \ref{2014.07.06.def3}.
\end{proof}
\begin{lemma}
\llabel{2014.07.06.l4}
If $Y=(g,i)^*(Z)$ where $g:ft^i(Y)\sr ft^i(Z)$ then
$$s_{f,i}=s_{f\circ q(g,Z,i),i}$$ 
\end{lemma}
\begin{proof}
Follows by induction using condition (4) of Definition \ref{2014.07.06.def1} and condition (7) of Definition \ref{2014.07.06.def3}.
\end{proof}
}

\def\cprime{$'$}

\end{document}